\long\def\appendix#1{}
\long\def\appendix{}
\long\def\xappendix#1{}
\title{ Unit Grid Intersection Graphs: Recognition and Properties }
\author{Irina Musta\textcommabelow t\u a\inst{1} \and Martin Pergel \inst{2}}
\institute{Berlin Institute of Technology, Institut f\"ur Mathematik, MA 2-2, Stra{\ss}e des 17. Juni 136,\\ 10623 Berlin, Germany; funded by Berlin Mathematical School\\
E-mail: \email{mustata@math.tu-berlin.de}
\and
Department of Software and Computer Science Education (KSVI),\\
Charles University Prague. \\
E-mail: \email{perm@kam.mff.cuni.cz};\\
Funded by the GraDR project, as part of the EuroGiga, under a Czech research grant GA\v{C}R GIG/11/E023.
}
\begin{document}

\maketitle

\begin{abstract}

It has been known since 1991 that the problem of recognizing grid intersection graphs is NP-complete. Here we use a modified argument of the above result to show that even if we restrict to the class of unit grid intersection graphs (UGIGs), the recognition remains hard, as well as for all graph classes contained inbetween. The result holds even when  considering only graphs with arbitrarily large girth. 
Furthermore, we ask the question of  representing UGIGs on grids of minimal size. We show that the UGIGs that can be represented in an $(1+\varepsilon) \times (1+\varepsilon)$, for $0<\varepsilon\leq 1$ grid size are exactly the orthogonal ray graphs, and that there exist families of trees that need an arbitrarily large grid. 

\end{abstract}

\newtheorem{prop}{Proposition}
\newtheorem{thm}{Theorem}
\newtheorem{lem}{Lemma}
\newtheorem{cor}{Corollary}

\section{Introduction} \label{sec:intro}


A graph $G=(V,E)$ is said to be an intersection graph of a set-system
${\cal S}\subseteq 2^{\cal M}$ for some set $\cal M$ if each vertex $v\in V$
can be represented by a set $s_v\in \cal S$ such that $uv$ is an edge of $G$
if and only if $s_u$ and $s_v$ have a non-empty intersection. As each graph
has an intersection representation (with a suitable set {\cal M} and set-system
{\cal S}), it is interesting to restrict the
set-system $\cal S$, as well as the base set $\cal M$. A widespread type of such restrictions consists in assuming $\cal M$ to be the plane and $\cal S$ a collection of topologically or geometrically defined objects in the plane. If the objects are arc-connected, one can speak about geometric intersection graphs  

Geometric intersection graphs have been explored because an appropriate
intersection representation often permits the design of efficient algorithms for
generally (NP-)hard problems. Applications of these intersection graphs can
be found not only in VLSI-circuit design and compiler-construction, but
also in archaeology or even in the modeling of ecosystems \cite{MM}.

The advantage of working with (geometric) intersection graphs is that it permits working with elements of a representation, which often exhibit structure that allows the development of efficient algorithms. The recognition of such graphs is of particular interest, as well as the relationship between classes of geometric graphs modelled as intersection graphs of similar types of objects.

\subsection{Notions and definitions} \label{ssec: notdef}

One of the best understood classes of geometrically-defined intersection
graphs is that of interval graphs \cite{Gil} (representable as intersection graphs of intervals on a line). For this class, the maximum independent set and maximum clique can be found in polynomial
time, as well as the chromatic number, since interval graphs are perfect.  This class  can also be recognized in linear time \cite{Boo,HMPV}.

Another well-known class is the class of string graphs, intersection graphs
of arc-connected curves in a plane. For string graphs, clique, colorability,
independent set and several other problems are NP-complete. The recognition is also
NP-complete \cite{KratII,Pach,SSS}. When restricting the curves shape to
straight line segments, the resulting class is that  of segment graphs (SEG). Despite the extra structure information, the problems of finding the chromatic numbers or a maximum independent set remain NP
-hard. Almost the same holds for the recognition problem; it is NP-hard but it is not known whether it is in NP. A graph class intermediary between string and SEG is that of pseudosegment graphs (PSEG). It is obtained using "topological segments", i.e.,  curves such that each
pair  intersects at most once \cite{Fel,KM,ChGO}.
The recognition problem for this class is already known to be NP-complete.

Due to the complexity of the latter three classes, special subclasses have been
defined. One such instance occurs when allowing the straight line segments to use 
only one of $k$ prescribed directions, in which case the corresponding graphs are called k-directional segment ($k$-DIR) graphs \cite{KratII}.
This does not simplify the recognition problem, since even 2-DIR graphs are hard to recognize \cite{KratII}.

One restriction that can be applied particularly to 2-DIR graphs, is the prohibiting of  intersections for any two collinear segments. The 2-DIR graphs with this property are bipartite, with one partition representable by (w.l.o.g.) horizontal, the other by vertical segments. The class is called PURE 2-DIR or grid intersection graphs (GIG). In the sequel,  the latter terminology will be used. This class can be further restricted
by prescribing each segment to have constant (unit) length, in which case one obtains the
class of unit grid intersection graphs (UGIG), which makes the focus of this paper. Sandwiched between GIG and UGIG, we can define
a class USEG (unit-segment), of grid intersection graphs where the unit-length restriction applies only to the segments representing one partition.  

Obviously, it holds that $\hbox{UGIG}\subseteq\hbox{USEG}\subseteq\hbox{GIG}
\subseteq \hbox{SEG}\subseteq\hbox{PSEG}$. All the inclusions are proper. For the
first two inclusions we show the proof in Section \ref{sec:relothers}.

Another particular case occurs when a GIG has a representation where the segments could be extended into half-lines, without creating additional intersections. This class is called \emph{orthogonal ray graphs} (ORGs), and 
it was introduced in \cite{STU}, together with the observation that they constitute a subclass of UGIGs.

Considering graph-optimization problems like, e.g., maximum clique or chromatic number,
on two graph-classes ${\cal A} \subseteq \cal B$, an efficient algorithm
for class $\cal B$ often provides an efficient algorithm for class $\cal A$

This is not the case with the recognition problem. A polynomial algorithm for recognizing a superclass
provides no information about the recognition of one if its subclasses or conversely.

Since there are many intersection-defined graph classes, answering the recognition
problem for individual classes becomes inefficient. 
Therefore, Kratochv\'\i l with several coauthors \cite{KratII,KM,HK}
started designing polynomial reductions that can show hardness
of recognition for any class $\cal C$ such that ${\cal A}\subseteq {\cal C}
\subseteq \cal B$, where $\cal B$ is a known hard class. The first reduction of this type showed the hardness
everywhere between segment- and string-graphs (hence, e.g., pseudosegment graphs).
Later this concept gets named {\em sandwiching} \cite{KP,Perm}.

When recognizing intersection graphs, the density of edges seems to play an important role (providing the information about the presence of large cliques or about the girth, i.e. length of the shortest cycle).
The article \cite{KP} shows that there are graph classes
for which low edge  density may change the recognition class (polygon-circle graphs), but also classes that remain hard to recognize even with arbitrary girth (segment- and
pseudosegment-graphs).

In the article we explore particular structural properties of UGIGs (optimal size of representation)
and their relation to other graph-classes, and conclude by showing that this class is NP-complete
to recognize.  This class remains NP-complete
even with arbitrary girth and, moreover, no polynomially recognizable
class can be found between UGIGs and PSEG.

\subsection{Paper structure}

After having introduced  the relevant definitions of graph classes in the beginning of this section, we will proceed by summarizing the  known results about UGIGs and related classes. The second section of the paper highlights the theoretical context of UGIGs, showing (and reminding, respectively) that UGIGs are a proper subset of GIGs, and that the proper character of inclusions is maintained even after introducing the hybrid class USEG. Furthermore, we reproduce the argument that shows the containment of ORG in UGIG, and conclude with noting that all trees have a unit grid representation.

Section 3 is divided into two parts. The first is concerned with providing an upper bound (in terms of the number of vertices) for the  grid size needed to acommodate a UGIG representation. The second part is a joint work with Stefan Felsner and addresses the question of describing  UGIGs that fit in grids of fixed sizes, or whether at all such bounds exist. We prove that the unit grid intersection graphs that can be drawn in a $(1+\varepsilon) \times (1+\varepsilon)$ size square with $0<\varepsilon < 1$ arbitrary, are exactly the orthogonal ray graphs and, at the same time, there exist families of trees that require an unboundedly large grid size.

Section 4 contains the main result with respect to the recognition and description of UGIGs. Here we modify an existing argument that shows the hardness of recognizing (2-DIR)segment graphs, which yields  a proof for the NP-hardness for all graph classes lying between UGIG and GIG, hence UGIG and USEG in particular.

\subsection{Previously known results} \label{sec:prevres}

The class of grid intersection graphs was introduced by I. Ben-Arroyo Hartman, I. Newman and R. Ziv, in \cite{HNZ}. It has subsequently been shown in \cite{BHPW} that it can be described as the intersection class between bipartite graphs and graphs of boxicity 2. In the same work, GIGs were described as the graphs whose bipartite adjacency matrix is cross-freeable, i.e. there exists a permutation of the lines and columns such that the resulting matrix is cross-free, where a cross is defined as $ \left( \begin{array}{ccc} \ast & 1 & \ast \\ 1 & 0 & 1  \\ \ast & 1 & \ast \end{array} \right)$.  Moreover, it was shown in  \cite{HNZ} that all bipartite planar graphs admit a grid intersection representation. The hardness of recognizing GIGs was established by J. Kratochv\'il in \cite{KratII}, whereas in \cite{KP}, he shows together with M. P.  that for any fixed $k$, $k$-DIR and pure $k-$DIR are hard to recognize. Unit grid intersection graphs were subsequently analyzed by Y.Otachi, Y. Okamoto and, K. Yamazaki in \cite{OOY}, where it 
is concluded that they are a proper subclass of GIGs, and strictly including  $P_5$-free 
bipartite graphs and bipartite permutation graphs, as a consequence of these being included in ORGs. In \cite{UEH}, R. Uehara establishes that some important problems, such as the existence of a Hamiltonian cycle, and the graph isomorphism problem, remain hard even when restricted to UGIGs and GIGs, respectively. Another analyzed subclass of UGIG (and of ORG) is that of two-directional orthogonal ray graphs (2-DORGs) that were introduced in \cite{STU} by A.S. Shrestha, S. Tayu and S. Ueno. Unlike what we show here about UGIGs, their recognition is in $P$. \cite{STU}. This result follows from the several models that can be used to describe 2-DORGs. They are the complement of the quadratic time recognizable class co-bipartite circular arc graphs \cite{STU,FHH}. At the same time, they are also exactly the comparability graphs of posets of height 2 and interval dimension 2, which have an efficient recoginition and can be described in terms of forbidden subgraphs \cite{FHM,FEL2}.

\section {Relationship with other graph classes} \label{sec:relothers}

\subsection {Comparison with ORG} \label{ssec:comporg}

Here we remind the following result from \cite{STU}:

\begin{prop}\label{prop:orginugig}

Orthogonal ray graphs are a  subset of unit grid intersection graphs.

\end{prop}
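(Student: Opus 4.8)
The plan is to start from an orthogonal ray representation of a given graph $G$ and transform it, in two stages, into a unit grid intersection representation. Recall that an \emph{orthogonal ray representation} assigns to each vertex of the bipartite graph $G=(A\cup B,E)$ a horizontal or vertical ray (half-line) in the plane, all rays of $A$ horizontal and all rays of $B$ vertical (or vice versa), so that two rays cross exactly when the corresponding vertices are adjacent, and no two parallel rays intersect. Since a horizontal ray and a vertical ray cross in at most one point, and parallel rays are disjoint, this is already a ``grid-like'' representation; the only two things that fail the UGIG definition are that rays are infinite rather than unit-length segments, and that collinear (parallel) rays, while disjoint, may still need to be handled so that after truncation they remain non-intersecting.

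\emph{Step 1: Truncate the rays to finite segments.} Because $G$ is finite, there are only finitely many crossing points among the rays. For each ray, I would cut it off just beyond the farthest crossing point it participates in (and, for an isolated vertex, replace it by an arbitrarily short segment placed away from everything else). This removes no existing intersection and creates no new one, since all parallel rays remain pairwise disjoint after truncation. The result is a (non-unit) grid intersection representation in which, moreover, we may assume by a further perturbation/scaling that all horizontal segments have distinct $y$-coordinates and all vertical segments have distinct $x$-coordinates.

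\emph{Step 2: Rescale the two axes independently to make every segment have the same length.} The key observation is that the adjacency pattern of a grid representation is invariant under any monotone rescaling of the $x$-axis and any independent monotone rescaling of the $y$-axis: such a map sends horizontal segments to horizontal segments, vertical to vertical, and preserves exactly which horizontal/vertical pairs overlap in their projections. I would use this freedom to enlarge the picture so that every vertical segment is at least as long, in the stretched coordinates, as the current maximum segment length $\ell$, and likewise for horizontal segments; then shorten each segment from its ``free'' end (the end not needed for any crossing) down to exactly length $\ell$, finally uniformly scaling so $\ell=1$. Since each truncated segment has a crossing-free portion at one end that can absorb the shortening, no intersection is gained or lost. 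This yields a representation by unit horizontal and vertical segments with the required intersection pattern, so $G\in\hbox{UGIG}$.

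\emph{Main obstacle.} The delicate point is Step~2: after independent axis rescalings the lengths of different segments are distorted by different amounts, so one must argue carefully that there is a single common scaling in which \emph{every} segment can be brought to a common length \emph{without} disturbing any overlap — in particular that each segment genuinely has a one-sided slack interval containing no crossing, which is exactly what the ``ray'' hypothesis buys us (a ray can always be extended on its infinite side, equivalently truncated flexibly). Making this simultaneous normalization precise, rather than the individual geometric moves, is where the real content lies; the rest is routine bookkeeping. \qed
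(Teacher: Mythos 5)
Your overall strategy (truncate the rays, then normalize lengths) is in the spirit of the paper's proof, but Step~2 as written has a real gap --- one you flag yourself --- and the machinery you invoke there (independent monotone rescalings of the two axes, followed by shortening every segment from its ``free'' end to a common length $\ell$) does not obviously close it. After your Step~1, the slack at the end \emph{beyond} the farthest crossing is only the tiny margin you added, so ``shortening from the free end'' can essentially never absorb the difference between a segment's length and $\ell$; the slack that actually exists sits at the apex end (between the apex and the nearest crossing) and on the infinite side of the ray, where the segment may safely be \emph{extended}, not shortened. Moreover, a single monotone rescaling of one axis cannot in general equalize the lengths of all segments parallel to that axis (three horizontal segments with $x$-projections $[0,1]$, $[1,2]$ and $[0,2]$ already make this impossible), and any stretching that lengthens the short segments also stretches the crossing spans of the long ones, so the two inequalities you need simultaneously --- every crossing span at most $\ell$ and every length at least $\ell$ --- are not guaranteed by your moves.

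The missing observation that makes everything collapse is that you need not keep the apex as an endpoint at all: it suffices to replace each ray by \emph{any} unit sub-segment of that ray containing all of that ray's crossing points (a sub-segment of the ray gains no intersections, and covering the crossings loses none). Since there are finitely many crossings, they all lie in some axis-parallel square, and after one \emph{uniform} scaling that square has side less than $1$; hence the crossings on any single ray span less than $1$, and the unit sub-segment of the ray whose near endpoint is the ray's first crossing does the job. This is exactly the paper's one-line proof: fix a square containing all intersection points and restrict the representation to its interior. No independent axis rescalings and no case analysis on free ends are needed.
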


\begin{proof}

Let $G$ be an ORG, together with a fixed representation $\mathcal{R}$. Let $\mathcal{L}$ be a square with sides parallel to the coordinate axes that contains all intersection points in $\mathcal {R}$ inside of it. (see the figure below). Then, it is clear that the restriction of $ \mathcal{R}$ to the interior of $\mathcal{L}$  provides a UGIG representation of $G$. 

\begin{figure}[htbp]

\centering

\includegraphics[width=3.5cm]{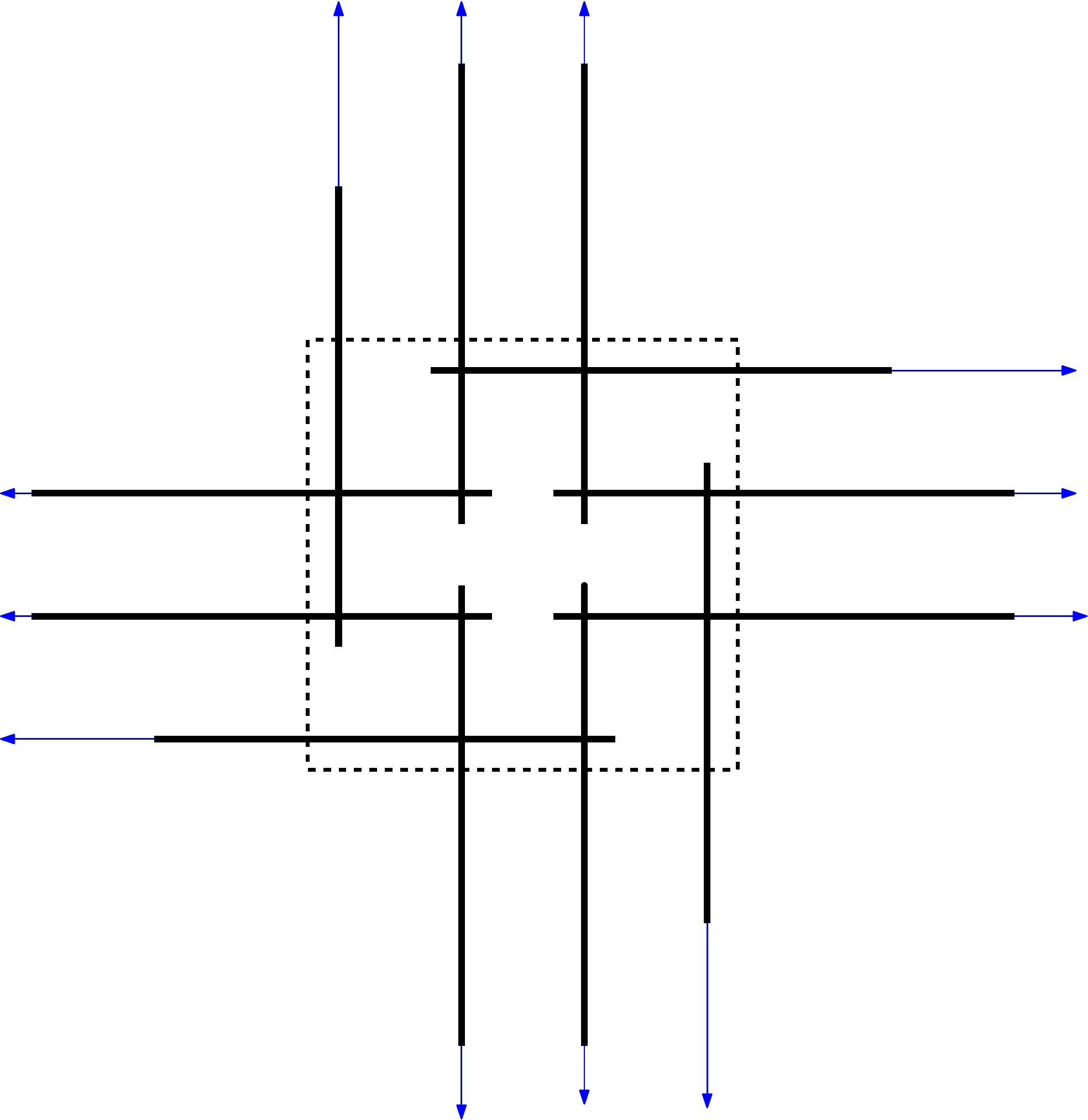}

\caption{ORGs are a subset of UGIGs}

\label{fig:orginugig}

\end {figure}

\end{proof}

In addition to the above result, we can show that
$ORG\subset UGIG$ is a proper inclusion.
To show this, it suffices to prove the following result, also detailed in \cite{STKU}, however using here another proof idea:

\begin{prop}\label{lem: nonorgcycles}

All cycles $C_{2n}, n\geq 7$  do not have an ORG representation.

\end{prop}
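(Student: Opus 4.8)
The plan is to argue that any orthogonal ray representation of a long even cycle is forced into a rigid "monotone staircase" structure that cannot close up. Recall that in an ORG representation the horizontal segments can be extended to rays going, say, leftward (the vertices of one color class, call them the row-vertices) and the vertical segments to rays going downward (the other color class, the column-vertices); equivalently, a column-vertex at $x$-coordinate $a$ sees a row-vertex at $y$-coordinate $b$ precisely when the column-ray, which occupies $\{a\}\times(-\infty,b]$ meets the row-ray $(-\infty,a']\times\{b\}$, i.e. precisely when $a\le a'$ in a suitable normalization. The key structural fact I would extract first is a \emph{monotonicity lemma}: after sorting the row-vertices by their $y$-coordinates and the column-vertices by their $x$-coordinates, the adjacency matrix becomes a staircase (this is essentially the characterization of $2$-DORGs, but here only one ray direction per class is needed, and we only need the "local" consequence). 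Concretely, if a column-vertex $c$ is adjacent to row-vertices $r$ and $r'$ with $r$ below $r'$, and another column-vertex $c'$ lying to the right of $c$ is adjacent to $r'$, then $c'$ is also adjacent to $r$ — the "staircase/ Gamma-free" property up to choosing orientations of the rays.

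Next I would walk around the cycle $C_{2n}=v_1v_2\cdots v_{2n}v_1$, whose vertices alternate between row- and column-vertices. Each row-vertex $v_{2i+1}$ has exactly two neighbors $v_{2i}$ and $v_{2i+2}$ on the cycle (and no others), and similarly for column-vertices. Using the monotonicity lemma I would track the pair $(y\text{-coordinate of the current row-vertex}, x\text{-coordinate of the current column-vertex})$ as one traverses the cycle. The degree-$2$ condition is the crucial restriction: because each vertex sees exactly its two cycle-neighbors and nothing else, consecutive column-vertices cannot both be "far left" of the same row-ray, so each step around the cycle is forced to move strictly in a fixed direction along one of the two sorted orders. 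This should yield a strictly monotone sequence of, say, $x$-coordinates of the column-vertices as we go around the cycle in one direction — but a strictly monotone cyclic sequence is impossible once the cycle is long enough. The bound $n\ge 7$ (i.e. at least $14$ vertices, at least $7$ of each color) is where the "long enough" kicks in: small cycles can cheat by using the finite extent of the rays and by the two cycle-neighbors of a vertex coinciding in the degenerate sense, so the forcing only becomes airtight past a constant threshold, and I would pin down that threshold by a short case analysis on how many row-vertices/column-vertices can have extremal coordinates.

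The main obstacle I anticipate is making the "forced monotonicity" argument fully rigorous rather than merely plausible: one must rule out the representation locally back-tracking, which requires carefully using both the non-adjacency of non-consecutive cycle vertices \emph{and} the freedom to orient each of the two families of rays (left/right, up/down) — there are a few orientation cases, and in each one must check that the degree-$2$ constraint together with the staircase property genuinely excludes a "turn-around." A secondary subtlety is the boundary square $\mathcal{L}$ from Proposition~\ref{prop:orginugig}: since rays are infinite, an apparent failure to close up must be shown to be a genuine obstruction and not an artifact of clipping, which is why phrasing everything in terms of the combinatorial adjacency pattern (the staircase matrix) rather than the geometry is the cleanest route. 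Once the monotonicity is established, the contradiction — a strictly increasing function on a cyclic order of length $\ge 7$ — is immediate.
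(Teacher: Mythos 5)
Your plan has a genuine gap at its core. The ``monotonicity lemma'' you propose is really a property of \emph{two-directional} orthogonal ray graphs (one fixed ray direction per colour class); in a general ORG each horizontal ray independently points left or right and each vertical ray independently up or down, and the staircase structure of the adjacency matrix simply fails. This is not a technicality you can defer: $C_{12}$ (the case $n=6$) \emph{is} an ORG, and its representation (the spiral in Figure~\ref{fig:orginugig}) is exactly a non-monotone tour that exploits all four ray directions. So the claim that ``each step around the cycle is forced to move strictly in a fixed direction'' is false as stated for $8\le 2n\le 12$, and the entire burden of the proof lies in explaining what changes at $n=7$. Your proposal acknowledges this (``the forcing only becomes airtight past a constant threshold, and I would pin down that threshold by a short case analysis'') but never supplies that analysis, which is precisely the hard part; as written, the same sketch would ``prove'' that $C_8$ is not an ORG, which is wrong.

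For comparison, the paper resolves exactly this point with a pigeonhole step: among the $n\ge 7$ vertical rays, at least four must share the same infinite direction, and four co-oriented parallel rays $v_1,\dots,v_4$ force the existence of a \emph{separating edge} --- an edge $e$ whose drawing disconnects the representations of two vertices $u_1,u_2$ not adjacent to $e$'s endpoints. In a cycle, any such pair is joined by a path avoiding both endpoints of $e$, giving a topological (Jordan-curve) contradiction. That argument is local and needs no global monotone order, which is why it cleanly isolates the threshold $n\ge 7$ (it is where the pigeonhole guarantees four co-oriented rays). If you want to salvage your combinatorial route, you would need to first prove an analogue of the pigeonhole reduction to a co-oriented subfamily and only then invoke a staircase/monotonicity property restricted to that subfamily; without that reduction the argument does not go through.
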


\begin{proof}

Assume the contrary, i.e. there exists a $C_{2n}, n\geq 7$ with a (fixed) representation $\mathcal{R}$ as an orthogonal ray graph. Without loss of generality, we concentrate on the (at least) seven vertices represented as vertical halflines. By the pigeon hole principle, there must exist at least four of them, the corresponding halflines of which have the same infinite direction, w.l.o.g. $y$-axis positive. From left to right, we denote these vertices $v_1,v_2,v_3,v_4$. 

A \emph{separating edge} in $\mathcal{R}$ is defined as an $e\in E$, such that there exist $u_1,u_2$ in $V$, with the neighbourhoods of $u_1$ and $u_2$ disjoint from $V(e)$, such that $ \mathcal {R}(u_1)$ and $ \mathcal {R}(u_2)$ are separated in different plane regions by $\mathcal{R}(e)$.

The existence of a separating edge suffices to show $\mathcal{R}$ is no ORG representation. Indeed, assume this were the case. Then, any path connecting $u_1$ and $u_2$ in $G$ would have to contain one of the endpoints of $e$. However, since in this case $G=C_{2n}, n\geq 7$, for every $e, u_1, u_2$ as above there exists a path connecting $u_1$ and $u_2$ disjoint from $e$, contradiction. 

We conclude the proof with the following

\begin{lem}

Any representation $\mathcal{R}$ of $G$ as an ORG contains a separating edge.

\end{lem}

\begin{proof}

We consider the vertices $v_1,v_2,v_3,v_4$ as defined above and concentrate on $v_2$. Its two neighbours must be represented in one of the three ways below:

\begin{itemize}

\item In case a), the neighbours of $v_2$ are represented by halflines pointing in opposite directions. Then, $v_1$ is separated by the edge going to the right from the vertex among $v_3,v_4$ not belonging to the edge.
\item In case b), both neighbours of $v_2$ correspond to halflines pointing to the left. An edge among these two  is not incident to $v_1$ and separates it from $v_3$ or $v_4$. 
\item In case c), both neighbours of $v_3$ are drawn as halflines pointing to the right. As above, an edge among these two is not incident to $v_1$ and separates it from its non-endpoint among $v_3$ and $v_4$.

 \end{itemize}

 \begin{figure}[htbp]

\centering

\includegraphics[width=10cm]{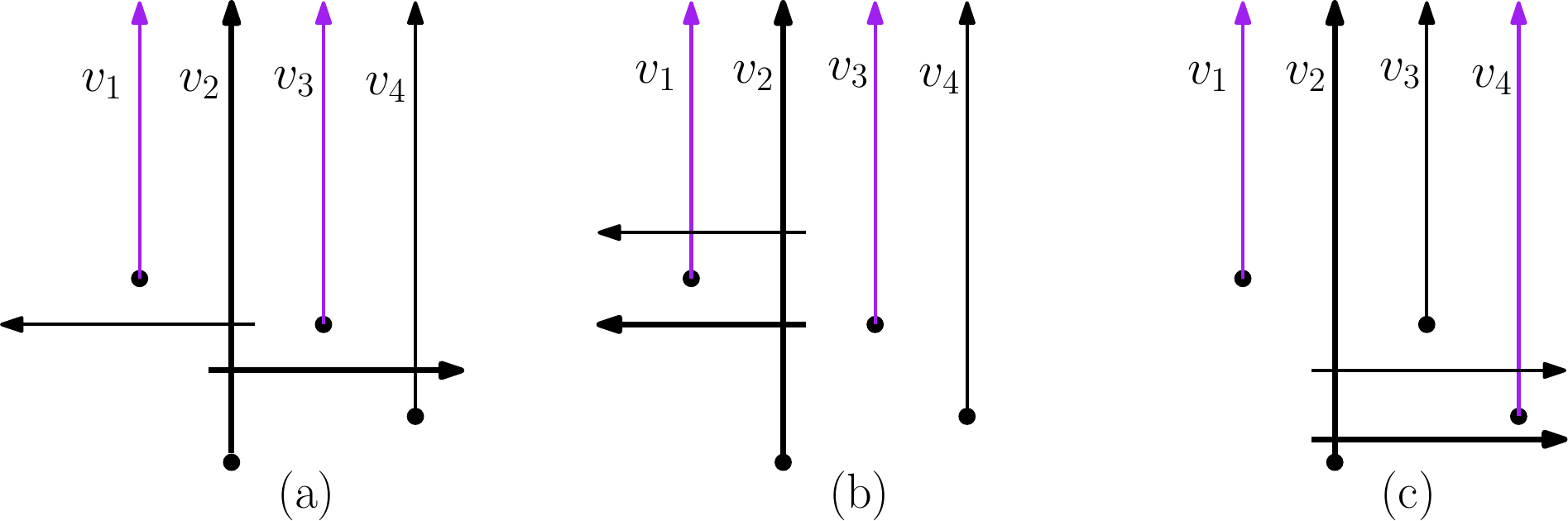}

\caption{The possible representations of the neighbours of $v_2$}

\label{cycnotorg}

\end {figure}

\end{proof}

\end{proof}

\subsection{Comparison with USEG}\label{ssec: compuseg}

\subsubsection{ UGIG and USEG} \label{sssec: ugiguseg}

It has been shown in \cite{OOY} that there are graphs admitting a grid representation that cannot be represented with unit (constant length) segments. We will here provide another such example of another, smaller graph which we will denote $\mathcal{S}$, which in fact even admits a segment-unit representation, hence proving the proper inclusion of UGIG in USEG. 

\begin{figure}[htbp]

\centering

\includegraphics[width=13cm]{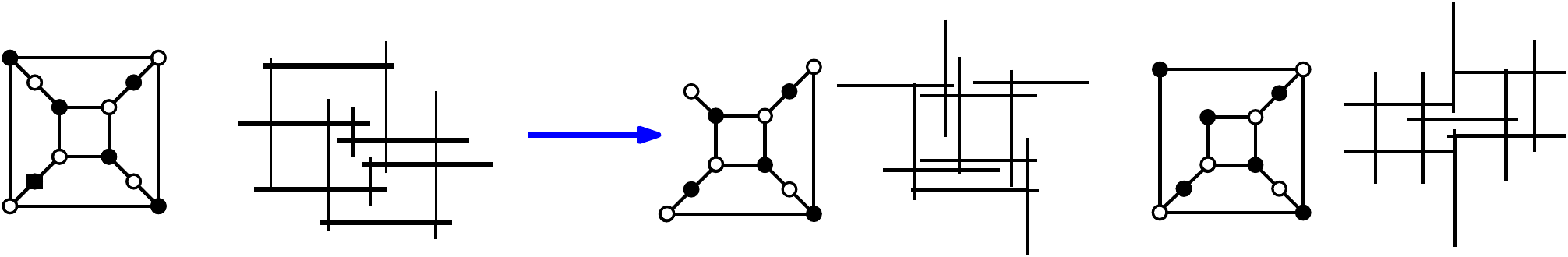}

\caption{Left: The graph $\mathcal{S}$ with a USEG representation. Right: Any induced subgraph of $\mathcal{S}$ is a UGIG}

\label{fig:notugig}

\end {figure}

\begin{prop}

The graph  $\mathcal{S}$ belongs to $\hbox{USEG} \setminus \hbox{UGIG}$. 

\label{prop:properinclusion}

\end{prop}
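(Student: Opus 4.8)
The statement asserts two things about the specific graph $\mathcal{S}$ (given by the figure): first, that $\mathcal{S} \in \hbox{USEG}$, i.e.\ it has a grid intersection representation in which the segments of one partition class all have unit length; and second, that $\mathcal{S} \notin \hbox{UGIG}$, i.e.\ no grid intersection representation of $\mathcal{S}$ has \emph{all} segments of unit length. The first half is handled directly by exhibiting the representation drawn on the left of Figure~\ref{fig:notugig} and checking that the displayed horizontal and vertical segments realize exactly the adjacencies of $\mathcal{S}$, with one colour class of equal length; this is a finite verification and I would simply assert it, pointing to the figure. The real content is the second half, the impossibility argument.

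For the impossibility, I would proceed by contradiction: suppose $\mathcal{S}$ has a unit grid intersection representation $\mathcal{R}$, with vertices of one part drawn as unit horizontal segments and the other as unit vertical segments. The plan is to locate in $\mathcal{S}$ a small configuration — presumably a vertex of high degree together with its neighbourhood, or an induced subgraph such as a long induced path or a subdivided star — whose combined geometric demands are incompatible with the unit-length constraint. Concretely, if a horizontal unit segment $h$ meets several pairwise non-adjacent vertical unit segments, those verticals must have $x$-coordinates lying in an interval of length $1$ (the span of $h$) while being forced, by their own non-adjacencies to other horizontals, to be pairwise far apart vertically and to ``reach'' certain other horizontal segments. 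Chaining these interval constraints around $\mathcal{S}$ should yield a contradiction of the form ``some segment must have length $>1$'' or ``two segments forced to intersect are forced to be disjoint''. I expect the argument to mirror the style of the $C_{2n}$ / separating-edge argument used for Proposition~\ref{lem: nonorgcycles}: identify a short cycle or path in $\mathcal{S}$, use the unit-length bound to pin down the relative positions of consecutive segments, and derive that an edge which should be present is geometrically blocked (or vice versa).

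The auxiliary observation on the right of Figure~\ref{fig:notugig} — that every \emph{proper} induced subgraph of $\mathcal{S}$ is a UGIG — is not logically needed for the proposition, but I would mention it to show $\mathcal{S}$ is a minimal obstruction and to motivate why the argument must use the full graph; it also reassures the reader that no smaller example works.

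\textbf{Main obstacle.}
The hard part is the case analysis in the impossibility proof: one must show that \emph{every} assignment of left/right and up/down positions to the segments around the critical vertex leads to a violation, and unit-length representations (unlike ray representations) do not come with a clean ``infinite direction'' invariant, so the bookkeeping is finickier than in the ORG case. The key trick is to extract from $\mathcal{S}$ the right rigid sub-configuration — a vertex whose neighbours' neighbours force two ``ends'' of a unit segment to be simultaneously pushed apart beyond distance $1$ — so that the contradiction is forced regardless of the orientation choices. Getting that configuration small enough that the case split stays manageable is where the design of $\mathcal{S}$ (and the proof) really lives.
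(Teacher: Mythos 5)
Your overall division of the task is right (exhibit the USEG representation from the figure; argue impossibility of a UGIG representation by contradiction), and your remark that the minimality observation is a side comment rather than part of the proof matches the paper. But the impossibility half of your proposal is a description of what kind of argument you hope exists, not an argument: you never identify the actual structural feature of $\mathcal{S}$ that the proof hinges on, and the generic devices you suggest (a high-degree vertex with its neighbourhood, a long induced path, ``chaining interval constraints'') are not what makes $\mathcal{S}$ work. The paper's proof turns on the fact that $\mathcal{S}$ contains two vertex-disjoint $4$-cycles joined by paths. A $4$-cycle of axis-parallel \emph{unit} segments is rigid enough that two such cycles cannot be nested: nesting would force the inner cycle's horizontal (and vertical) segments to be strictly shorter than the outer cycle's, contradicting the common unit length. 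So the two $4$-cycles must lie side by side, and then one analyses how the connecting paths pair up the horizontal segments $a_1,a_2$ of $C_1$ with $b_1,b_2$ of $C_2$: the ``crossed'' pairing $(a_1,b_2),(b_1,a_2)$ forces two vertex-disjoint paths to intersect, and the ``parallel'' pairing $(a_1,b_1),(a_2,b_2)$ forces the connecting vertical segments to block the rightmost vertical segment of $C_2$ and the leftmost of $C_1$. That dichotomy (nested versus side-by-side) and the pairing/blocking analysis are the entire content of the proof, and neither appears in your proposal.

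Concretely, then, the gap is this: you would need to (i) observe that under the unit-length constraint two disjoint $4$-cycles cannot be nested, which is the one place the unit hypothesis enters in an essential way, and (ii) carry out the finite case analysis on how the cycles are joined when they sit beside each other. Without (i) your contradiction has no starting point --- in an unrestricted GIG representation the two $4$-cycles \emph{can} be nested, which is exactly why $\mathcal{S}$ is a GIG (indeed a USEG) but not a UGIG. Your instinct to mirror the separating-edge argument from the $C_{2n}$ case would not have led you here, since that argument exploits the infinite-direction invariant of rays, which, as you yourself note, is unavailable for unit segments.
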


\begin{proof}

As $ \mathcal{S}$ is clearly bipartite and planar, it follows immediately from \cite{HNZ} that it admits a grid segment representation. In figure \ref{fig:notugig} one can see that it even admits a representation with one partition consisting of unit segments.  Assuming a given representation as an UGIG, we distinguish two cases based on the relative position of the two 4-cycles:

\begin{itemize}

\item One of them is nested inside the other. This is clearly not possible, because, since disconnected, the length of the horizontal (vertical) segments of the inner cycle must be strictly smaller than the length of the horizontal (vertical) segments of the outer cycle. 
\item The two four cycles lie beside each other. We denote the two cycles  $C_1$ and $C_2$, respectively. Without loss of generality, $C_2$ lies to the right of $C_1$. Let $a_1, a_2$ and $b_1, b_2$ denote the horizontal segments of $C_1$ and $C_2$ respectively, numbered in decreasing height order. Note that it is not possible to pair the segments to be connected in $(a_1, b_2)$ and $(b_1, a_2)$ as this gives rise to a forbidden intersection of paths: 

\begin{figure}[htbp]

\centering

\includegraphics[width=5.5cm]{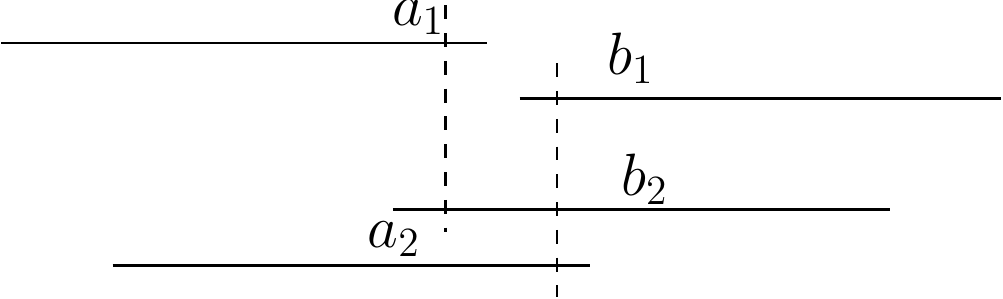}

\caption{Forbidden pairing}

\label{intersecting}

\end {figure}

Hence,the only admissible pairing is $(a_1,b_1)$, $(a_2,b_2)$. However in this case, regardless of the ordering of $a_1,a_2,b_1,b_2$, the connecting vertical segments block both the rightmost vertical segment of $C_2$ and the leftmost vertical segment of $C_1$.

\end{itemize}

\end{proof}

Furthermore, note that the above counterexample is minimal. As depicted on the right side of Figure \ref{fig:notugig}, the removal of any vertex (there are two equivalence classes here, the vertices of degree 2 and those of degree 3) makes a representation as an UGIG possible.

\subsubsection{USEG and GIG}\label{sssec: useggig}

The example from \cite{OOY} mentioned before, i.e., a graph with a grid intersection representation that is not a UGIG, can be shown along the same lines as in the original work not to admit even a USEG representation. 

\begin{figure}[htbp]

\centering

\includegraphics[width=6.5cm]{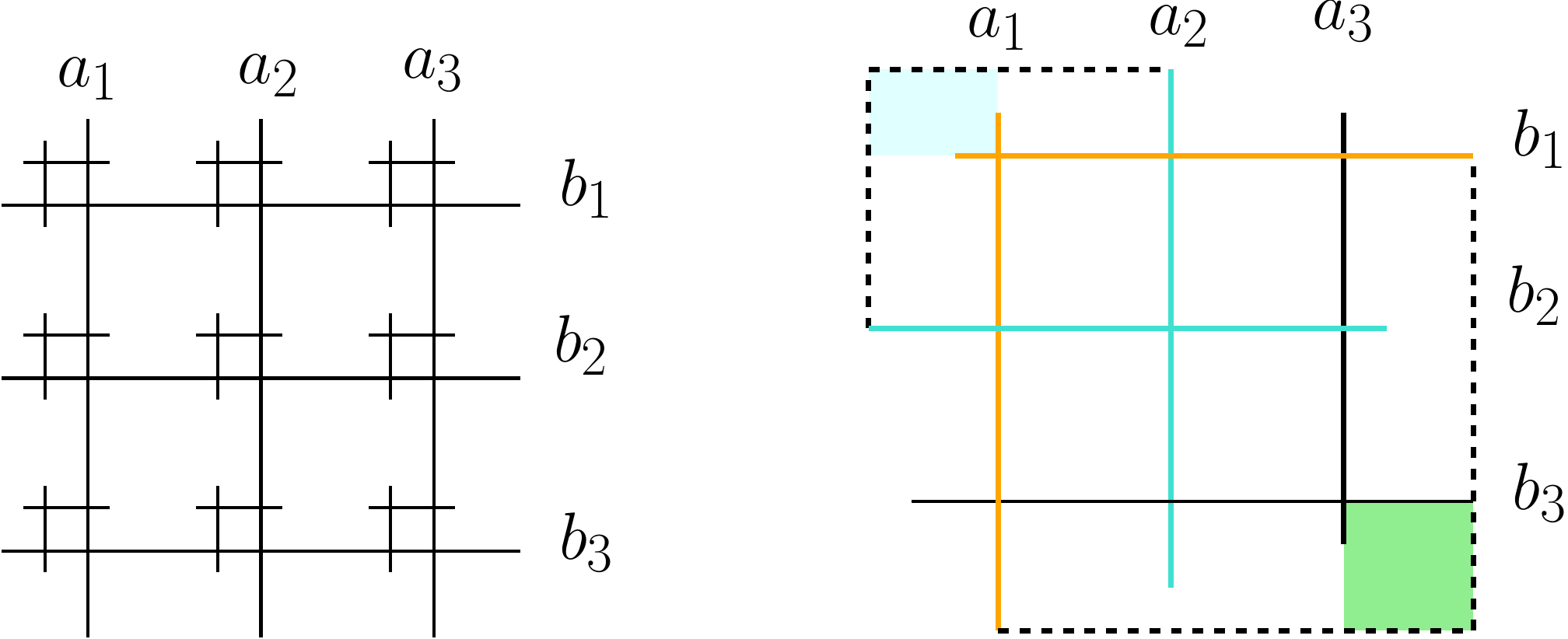}

\caption{GIG strictly includes USEG}

\label{notuseg}

\end {figure}

We  reproduce the argument from \cite{OOY} here, noting that  this time, the conclusion extends over representability as USEG. 

Assume the graph above is representable as a GIG such that for one partition the corresponding segments have constant length. Due to symmetry we can assume without losing  generality that these segments are the horizontal ones. In the given representation $\mathcal{R}$, let $a_1,a_2,a_3$ be the left-right order of the vertical segments of the induced $K_{3,3}$, and $b_1,b_2,b_3$ the top-down order of the horizontal ones.

The path of length two connecting $a_2$ and $b_2$ cannot be represented inside the square delimitated by $\mathcal{R}(a_1),\mathcal{R}(b_3),\mathcal{R}(a_3),\mathcal{R}(b_1)$, due to the horizontal segments having constant length. Hence, the halflines of the two vertices of this path meet over a corner of $\mathcal{R}$, w.l.o.g the top-left corner. But then the path connecting $a_1$ and $b_1$ must be represented over the bottom-right corner, lest the constant length of the horizontal segments be violated. This leaves no place for connecting $a_3$ and $b_3$ with a path of length two, contradiction.

\subsection {UGIG and trees}\label{ssec:ugigtrees}

 We will construct a graphical representation as in \cite{OOY}, where the result is not mentioned, based on which we conclude:

\begin{prop}
All trees can be represented as unit grid intersection graphs.

\label{prop:treesareugigs}

 \end{prop}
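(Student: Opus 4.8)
The plan is to give an explicit recursive construction of a unit grid intersection representation for an arbitrary tree $T$, working by induction on the number of vertices. First I would root $T$ at an arbitrary vertex $r$ and fix the natural bipartition of $T$ into "even" and "odd" levels; vertices on even levels will be represented by horizontal unit segments and vertices on odd levels by vertical unit segments (this is consistent since $T$ is bipartite). The key invariant to maintain during the induction is that each subtree can be drawn inside a thin axis-parallel rectangular region with the segment representing its root protruding from one designated side, so that the whole subtree is "reachable" only through that root segment and occupies a bounded, clearly delimited portion of the grid; crucially, the protruding part of the root segment must still have free length available (a fresh unused portion of unit length) so that the parent in the next level up can cross it.

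The heart of the construction is the handling of the children of a given vertex $v$. Suppose $v$ is drawn as a horizontal unit segment; its children $c_1,\dots,c_k$ must be vertical unit segments each crossing $\mathcal{R}(v)$. I would place the crossing points of $c_1,\dots,c_k$ at distinct $x$-coordinates along $\mathcal{R}(v)$, ordered left to right, and let each $c_i$ extend a tiny bit above and mostly below (or alternately up/down) so that the recursively-built gadget for the subtree rooted at $c_i$ hangs off $c_i$ in a narrow vertical strip that does not meet any other strip, nor any already-placed segment. Because the subtrees are disjoint, their strips can be made pairwise non-overlapping simply by choosing the $x$-coordinates far enough apart and scaling each subtree's drawing to fit in its allotted strip width — here is where the unit-length constraint bites, since we cannot shrink segments, so instead of scaling segments we must scale the spacing of the grid lines, i.e. choose the coordinates of the crossing points with enough room between consecutive ones. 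I would make this precise by carrying, as part of the inductive hypothesis, an explicit (but harmless) bound on the horizontal and vertical extent of the gadget for a tree on $n$ vertices, and showing the bound composes under the recursive step.

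The main obstacle, and the step deserving the most care, is verifying that no spurious intersections are created: a unit segment representing a child of $v$ could, if positioned carelessly, also stab the segment of $v$'s parent, or a sibling subtree, or a cousin subtree elsewhere in the tree. The way to defeat this is the "narrow strip / protruding end" invariant: each subtree gadget lives strictly inside its own bounding rectangle except for the unit-length stub of its root segment that sticks out of one prescribed side, and the parent segment crosses only that stub. Since the stub of $c_i$ sticks out on the side facing $\mathcal{R}(v)$ and all of $c_i$'s own subtree is confined to the opposite side, and since the rectangles for distinct subtrees are separated, the only crossings that occur are exactly the intended parent–child ones. I would close the argument by checking the base case (a single vertex, or an edge, drawn as one or two unit segments) and confirming that the composed gadget again satisfies the invariant, so the induction goes through and yields a valid UGIG representation of $T$; a small figure in the style of \cite{OOY} illustrating the recursive step would accompany this.
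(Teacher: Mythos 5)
Your plan founders on the central invariant: that the gadget for each child's subtree can be confined to its own ``narrow vertical strip,'' with the strips of distinct siblings pairwise disjoint. With unit segments this is impossible. If $v$ is a horizontal unit segment, the crossing points of its children $c_1,\dots,c_k$ all lie inside an interval of length $1$, so consecutive crossing points are at distance less than $1$ apart; but as soon as a child $c_i$ has children of its own, those are horizontal \emph{unit} segments crossing $c_i$, so the gadget for $c_i$'s subtree has horizontal extent at least $1$ --- at least as wide as all of $\mathcal{R}(v)$. Concretely, for three siblings crossing $\mathcal{R}(v)$ at $x_1<x_2<x_3$ with $x_3-x_1\le 1$, every unit-length horizontal segment crossing the middle child $c_2$ must contain $x_1$ or $x_3$ in its $x$-projection, so its strip necessarily invades a sibling's strip. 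Your own remark that ``we cannot shrink segments, so instead we scale the spacing of the grid lines'' does not rescue this: the spacing of the crossing points is capped by the unit length of $\mathcal{R}(v)$ and cannot be chosen ``far enough apart.'' So the separation that makes your no-spurious-intersection argument go through is never available, and the induction does not close.

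The paper's construction confronts exactly this difficulty and resolves it differently: sibling (and cousin) subtrees are allowed to overlap in projection, and are instead staggered in the orthogonal direction by placing the endpoints of each level's segments on a line of fixed non-axis-parallel slope. A parallel line $l_n$ then cuts off, at the end of each level-$n$ segment, a short ``free'' piece of length $\delta_n<\varepsilon_n$ (with $\varepsilon_n$ the minimum gap between the relevant endpoint coordinates), and these free pieces have pairwise disjoint projections; the next generation attaches only there. Disjointness of these tiny attachment zones --- not of whole bounding rectangles --- is what prevents spurious crossings. If you want to keep a recursive formulation, the invariant to carry is of this weaker kind (a reserved stub of controlled, shrinking length with a projection window disjoint from all other stubs at that level), not disjoint bounding boxes for entire subtrees.
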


 \begin{proof}

  \begin{figure}[htbp]

\centering

\includegraphics[scale=0.25]{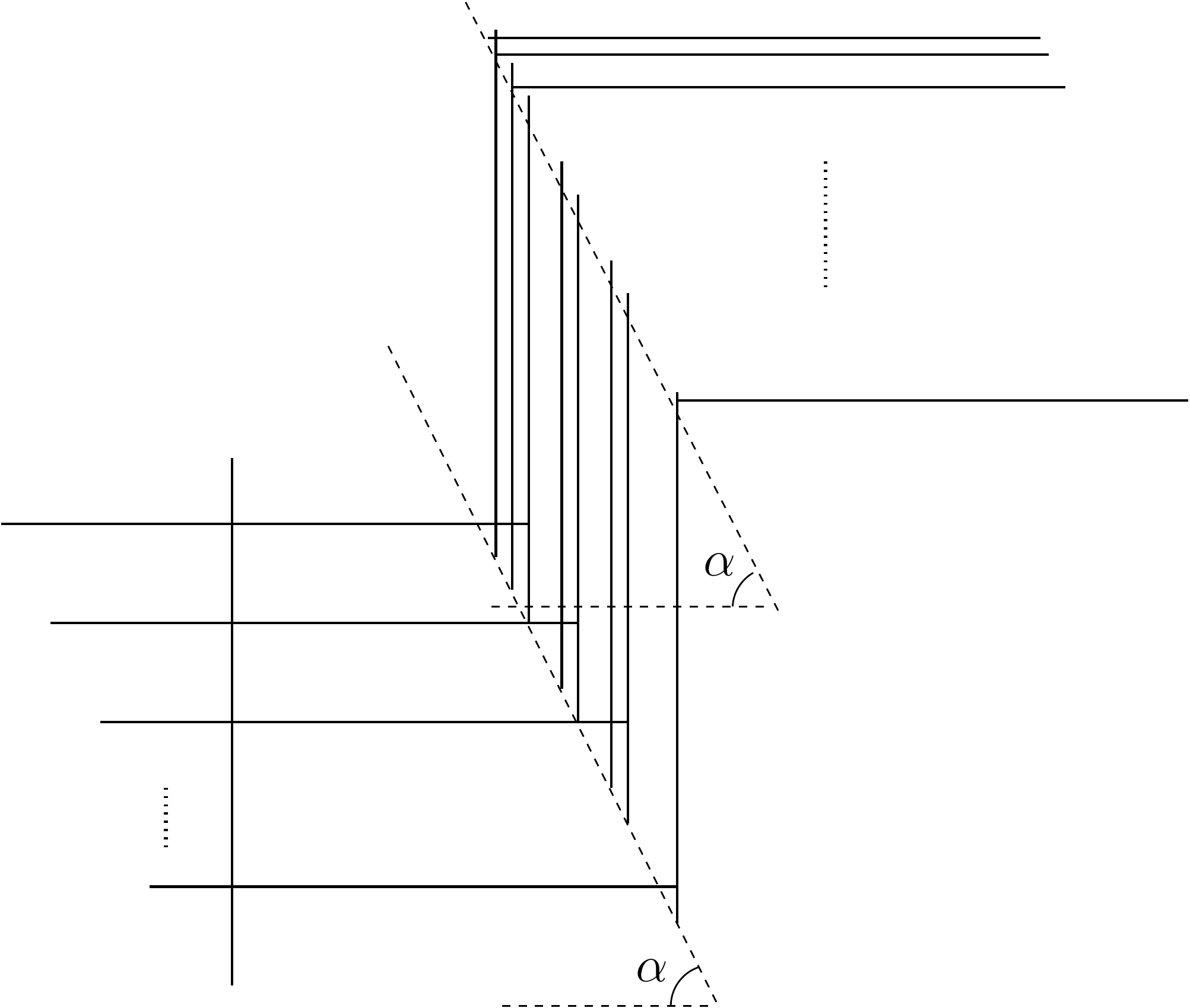}

\caption{General representation of trees as UGIGs}

\label{fig:trees}

\end {figure}

The general construction respects the recursive pattern suggested by picture \ref{fig:trees} above, with a more formal description below.

We consider a fixed slope $\alpha$ that is neither vertical nor horizontal and a given (w.l.o.g) vertical unit segment $r$ as the root. Let $l_1$ be a line  with slope $\alpha$. All children of $r$ are now added as horizontal unit segments the endpoints of which lie on a line parallel to $l_1$. For drawing the elements of each subsequent level $n$ of the tree, one proceeds iteratively, as follows (we present the proof for adding a level of vertical segments, the other case is analogous):

 Let $\varepsilon_n$ be the smallest distance between two $x$-coordinates of the rightmost endpoints of the segments belonging to the last level. We intersect these segments with $l_n$ parallel to $l_1$, such that the (note, constant) distance to the rightmost endpoints of the previous level is $\delta_n < \varepsilon_n$. Each horizontal segment now has a non-zero length piece at the end with a portion of $l_n$ lying directly below. The children of each such vertex can now be drawn as pointing upwards and with the lower endpoint on the segment of $l_n$ lying directly below the representation of the parent.

\end{proof}

\subsection {An upper bound for the area needed to represent an UGIG with $n$ vertices}

 We here investigate the problem of drawing a segment intersection model of a given unit grid intersection graph $G$ in the plane, with the aim of finding an upper bound on the necessary grid size in terms of the number of vertices of $G$. 

 The following holds:

 \begin{prop}

Any unit grid intersection graph on $n$ vertices can be represented
in a grid $(n+1)\times (n+1)$ with all coordinates being multiples
of $\frac{1}{n}$. Moreover, we can find such a representation that
with respect to each axis no two segments have the same
non-integral part of the coordinate.

\label{prop:canon}

\end{prop}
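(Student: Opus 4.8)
The plan is to start from an arbitrary UGIG representation $\mathcal{R}$ of $G$ and normalize it in two stages: first discretize the coordinates to a grid of spacing $1/n$, then perturb to make all non-integral parts distinct. Let $H$ be the set of horizontal unit segments and $V$ the set of vertical unit segments, with $|H|+|V|=n$. Each horizontal segment $h$ is determined by its $y$-coordinate and the $x$-coordinate of (say) its left endpoint; symmetrically for vertical segments. The first observation is that the combinatorial structure of the representation — which pairs $(h,v)$ cross — depends only on the relative order of the relevant coordinates along each axis, not on their exact values. So I would record, for the $x$-axis, the linear (pre)order of the set of all $x$-coordinates of left endpoints of horizontal segments together with the $x$-coordinates of the vertical segments; and symmetrically for the $y$-axis. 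There are at most $n$ objects projecting onto each axis, so each axis carries at most $n$ distinct coordinate values.

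The second step is to rescale: since only the order of the $\le n$ distinct $x$-values matters, I can replace them by values in $\{0, 1/n, 2/n, \dots, (n-1)/n\}$ preserving order — but here I must be careful that a horizontal segment has a fixed unit length, so its right endpoint is left-endpoint plus $1$. The key point is that an intersection $h\cap v\neq\emptyset$ means the $x$-coordinate of $v$ lies in $[x_h, x_h+1]$ and the $y$-coordinate of $h$ lies in $[y_v, y_v+1]$. If I snap each distinct $x$-value up (or down) to the nearest multiple of $1/n$ in an order-preserving way, and keep the unit length exactly $1$, then the integer part of each coordinate stays controlled and the containment relations $x_v\in[x_h,x_h+1]$ are preserved provided the snapping moves everything consistently; I would argue this by noting that $x_v\in[x_h,x_h+1]$ is equivalent to a conjunction of order relations "$x_h \le x_v$" and "$x_v \le x_h+1$", and the latter, after translating the point $x_h+1$ into the same ordered list, is again just an order relation among the at-most-$2n$ values $\{x\text{-coords}\}\cup\{x\text{-coords}+1\}$ — this larger list is what I actually discretize to multiples of $1/n$ while forcing the "+1" shifts to be exact, which is possible because the spacing $1/n$ divides $1$. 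After discretization every coordinate lies in a bounded range; a final translation puts everything in $[0,n+1]$, and since each segment has length $1$ and each coordinate is a multiple of $1/n$, all segments fit inside the $(n+1)\times(n+1)$ grid with coordinates in $\frac1n\mathbb{Z}$.

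For the last assertion — no two segments sharing a non-integral part with respect to a given axis — I would perturb within each "column" of equal $x$-coordinate: the horizontal segments and vertical segments that currently share an $x$-value can be spread out by amounts that are multiples of $1/n^2$ (or, more simply, re-run the whole argument with denominator $n$ replaced by a finer one, or observe that among the $\le n$ objects on one axis we have $n$ available residues $0, \frac1n,\dots,\frac{n-1}{n}$ and can assign distinct ones while respecting order, since there are exactly enough). One must check that spreading segments apart inside a column does not create or destroy a crossing: a tiny perturbation of $x_v$ keeps $x_v\in(x_h,x_h+1)$ as long as $x_v$ was strictly inside, and the boundary cases $x_v=x_h$ or $x_v=x_h+1$ can be eliminated beforehand by a preliminary generic perturbation of $\mathcal{R}$ (every UGIG has a representation with no such degeneracies, since the set of degenerate configurations is lower-dimensional).

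The main obstacle I expect is the bookkeeping around the fixed unit length: ordinary "stretch-and-snap" arguments for intersection graphs exploit full freedom to rescale each axis, but here rescaling the $x$-axis would change segment lengths, so I cannot rescale — I can only discretize in a length-preserving way, which forces me to work with the combined ordered list of coordinates and their $+1$ translates and to check that $1/n$-snapping can be done so that the "+1" relationship is respected exactly. Making that simultaneous snapping precise (showing the at-most-$2n$ values, with the rigid pairing $x\leftrightarrow x+1$, can be mapped order-isomorphically into $\frac1n\mathbb{Z}$ while keeping the pairing and staying in a range of width $\le n$) is the one genuinely non-routine point; everything else is standard perturbation.
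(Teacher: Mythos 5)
Your plan matches the paper's proof in essence: the paper also projects onto each axis, extends the projected points of the perpendicular segments to unit intervals, and then performs an order-preserving discretization onto multiples of $\frac{1}{n}$, realized concretely as a left-to-right greedy sweep that assigns each left endpoint to the smallest free multiple of $\frac{1}{n}$ compatible with the recorded overlap and disjointness relations (worst-case increment $1+\frac{1}{n}$ per segment, whence the $(n+1)$ bound). The one step you flag as non-routine --- mapping the doubled list $\{x\}\cup\{x+1\}$ order-isomorphically into $\frac{1}{n}\mathbb{Z}$ while respecting the rigid $+1$ pairing --- is exactly what that greedy sweep accomplishes, so your outline is sound and essentially identical to the paper's argument.
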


 Note that the lower bound for the granularity (multiples of $\frac{1}{n}$) is tight because
of $K_{1,n-1}$ which requires this precision for the (distinct) coordinates.

 \begin{proof}

 Let us consider an arbitrary UGIG-representation of a given graph $G(H\cup V, E)$, where $H$ denotes the set of vertices to be represented by horizontal segments, and $V$ those to be represented by vertical segments, respectively. We process each axis separately (i.e.  we find a canonical representation for each axis). Details are in the Appendix. 

Without loss of generality, in the sequel we will describe  the procedure for the $x$-axis. Projecting this arbitrary representation of $G$ on the $x$-axis, we obtain a sequence of intervals (corresponding to the vertices in $H$ ) and points (corresponding to the vertices in $V$). By eventually employing small perturbations, it can be assumed that the projected elements are all in general position: no interval is degenerated, no two endpoints coincide (here we treat the projection points stemming from $V$ as endpoints). 

To simplify the following sweeping-argument, we extend individual points
to unit-segments (in an arbitrary fixed  along the $x$ axis). Thus we obtain
an arrangement of unit segments. We build the canonical representation by performing a sweep from  left to  right and employing the following steps:

\begin{itemize}

\item For the left-most segment we assign to the right endpoint the coordinate $0$ (hence the left endpoint now has coordinate $-1$ ). This segment is now the reference from which the rest of the construction will follow.

\item We process the segments in their left to right ordering where each newly added segment  has its left endpoint assigned to the next free 'slot': namely, the smallest multiple of $\frac{1}{n}$ that is free, such that the overlapping (or disjointness) conditions to the previously assigned segments are not violated. This is illustrated in  Figure \ref{fig:sweep} below. Note that due to the chosen granularity it is always possible to find such a free slot. In the worst case, i.e., the segment to be added is disjoint to all the previous ones, the needed increment for the left coordinate is $1+\frac{1}{n}$. This leads to an upper bound of $n+1$ large bounding size per coordinate (which the initially mentioned example fulfills, note however, this is not an optimal representation).

\item In the next step, the projection on the $y-coordinate$ is processed analogously.

\item After the double sweep, all relevant coordinates being known, one can draw the final configuration. Note that the above procedure does not change the relative position of the endpoints and projected segments, hence not contradicting the adjacency and non-adjacency conditions. 

\end{itemize}

\begin{figure}[htbp]
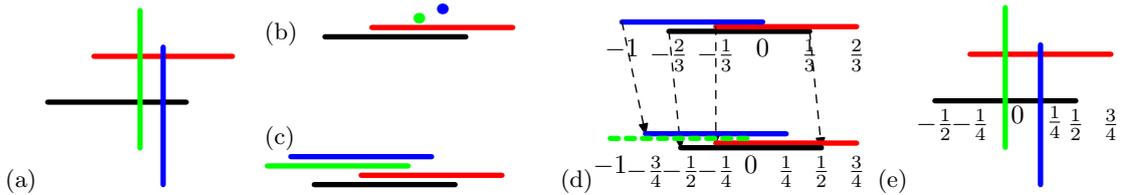


 (a) \includegraphics[width=2.5cm]{obrazky.6}  \hfill
\vbox{\hsize=3.5cm  (b) ~~\includegraphics{obrazky.7}
\vskip 1cm
(c) ~~\includegraphics{obrazky.8}} \hfill 
(d)\includegraphics{obrazky.9}
(e) \includegraphics{obrazky.11}

\caption{Considering the arrangement from picture (a) we make a projection onto one axis (b), i.e., perpendicular segments collapse into single points. We extend them in one direction (in our case to the left), see picture (c). Picture (d) shows the fourth step of the sweeping algorithm, i.e., extending the so far obtained representation. Picture (e) shows the final representation (with respect to $x$-axis, to obtain the desired representation it would be also necessary to sweep along the $y$-axis).}

\label {fig:sweep}

\end{figure}

 \end{proof}

\subsection {Description of UGIGs representable in several fixed-size plane regions}

\subsubsection{The corner structure}\label{ssec:corner}

Consider the union of two intersecting thin rectangular strips, one vertical, the other horizontal, of width $0<\varepsilon<1$ and length larger than 1. We call this region a \emph {corner structure}.  For any UGIG representation with all intersection points within this area, one notes that the vertical and horizontal segments can only lie in the vertical, and horizontal strip, respectively, yielding a situation as in the Figure \ref{corner}.


\begin{figure}[htbp]

\centering

\includegraphics[width=8cm]{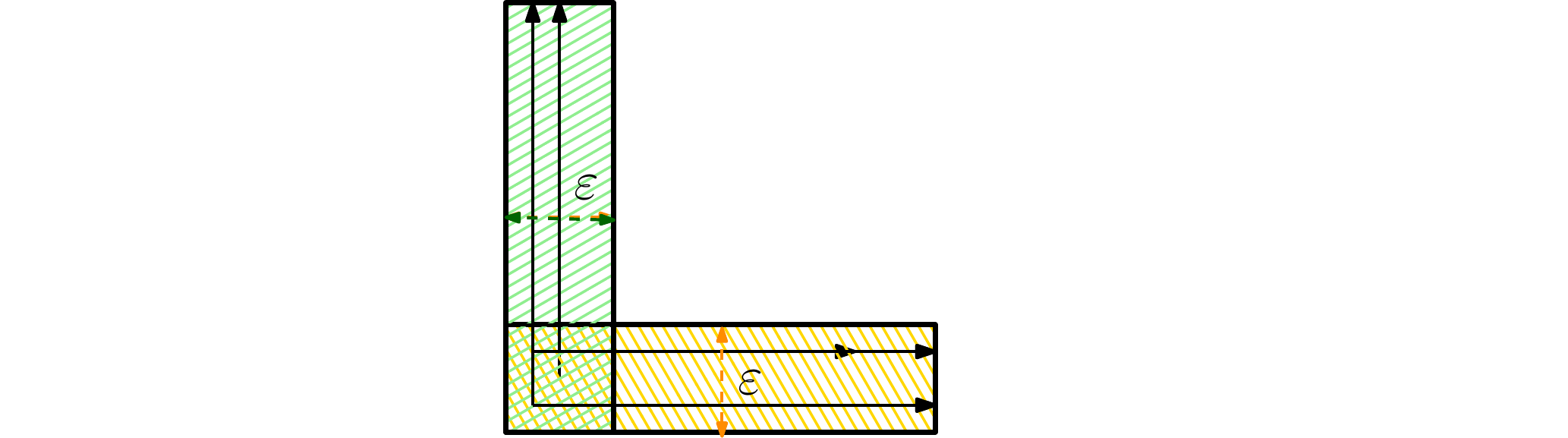}

\caption{The UGIGs fitting in the corner structure}

\label{corner}

\end {figure}

 Note that it is possible to prolonge the vertical(horizontal) segments into halflines with axis-positive infinite directions, thus obtaining a representation as a two-directional orthogonal ray graph. Conversely, every 2-DORG can be placed so that all intersection points fit in a square of side length less than $\varepsilon$, the containing corner structure being derived now immediately.

 Thus we conclude with the following 

 \begin{prop}

 The UGIGs fitting in a corner structure are exactly the two directional orthogonal ray graphs.

 \end{prop}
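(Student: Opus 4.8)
The plan is to establish the two inclusions separately, reusing the observations already recorded in the paragraph preceding the statement. For the forward direction, suppose $G$ has a UGIG representation $\mathcal{R}$ all of whose intersection points lie inside a corner structure $S$ (the union of a vertical strip of width $\varepsilon<1$ and a horizontal strip of width $\varepsilon<1$). First I would argue that every vertical segment of $\mathcal{R}$ must lie within the vertical strip: a vertical unit segment has length $1>\varepsilon$, so it cannot be contained in the horizontal strip, and if it avoided the vertical strip it would carry no intersection point at all — but in a connected-component analysis we may assume (or add a remark) that isolated vertices are harmless, or simply note that a segment with no intersection point represents an isolated vertex and can be placed freely. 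Symmetrically, every horizontal segment lies within the horizontal strip. Consequently all intersection points lie in the central $\le \varepsilon \times \varepsilon$ square where the two strips cross.

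The key step for the forward inclusion is then the ``prolongation'' claim: because every vertical segment already lies inside the vertical strip and every horizontal segment inside the horizontal strip, I can extend each vertical segment downward (in the $-y$ direction, say) to a full half-line and each horizontal segment to the left to a full half-line, without creating any new intersection. Indeed, two half-lines of the same orientation never meet; a downward vertical half-line and a leftward horizontal half-line can only meet inside the central square, and there the intersection pattern is already fixed by $\mathcal{R}$ and the unit-length geometry — the extensions only add points strictly outside the bounding box of the original segments in a region disjoint from the strips, so no spurious crossing appears. This exhibits $G$ as a two-directional orthogonal ray graph (all rays pointing in two fixed orthogonal directions). I would make precise here the direction conventions so that ``two-directional'' is literally satisfied: vertical rays all point one way, horizontal rays all point one way.

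For the reverse inclusion I would invoke the converse observation already stated in the text: given a 2-DORG representation, all of its (infinitely many, but finitely intersecting) crossing points lie in some bounded region, which can be scaled and translated into a square of side length $<\varepsilon$; truncating the rays just outside that square to unit segments (possible after scaling, since only finitely many combinatorial constraints on relative endpoint positions must be preserved, exactly as in the proof of Proposition~\ref{prop:orginugig}) yields a UGIG representation, and the thin strips of width $\varepsilon$ along the two ray directions through that square form the desired corner structure. Combining the two directions gives the equality.

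The main obstacle I expect is the careful bookkeeping in the scaling/truncation argument for the reverse direction — ensuring that after rescaling the rays to have the common unit length one can still truncate each one so that (i) every original intersection point survives, (ii) no new intersection is introduced, and (iii) the truncated segments still fit inside strips of the prescribed width $\varepsilon$. This is the same kind of ``finitely many general-position inequalities are preserved under small perturbation/scaling'' reasoning used earlier in the paper (Propositions~\ref{prop:orginugig} and~\ref{prop:canon}), so I would treat it briefly, referring back rather than repeating the details; the genuinely new content is just the forward prolongation step, which is short.
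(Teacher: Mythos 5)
Your overall strategy coincides with the paper's (the paper settles both inclusions in two sentences: prolong the segments into half-lines ``with axis-positive infinite directions'' for the forward inclusion, and shrink-and-truncate a 2-DORG representation for the converse), and your reverse direction is fine. The genuine problem is in the forward direction: the extension direction is \emph{not} a free choice, and the pair you pick (``downward, say'' and ``leftward'') is supported by a claim that is false in general. Concretely, place the corner structure so that the crossing square is $[0,\varepsilon]^2$ and the two strips extend upward and to the right of it (this is the orientation behind the paper's ``axis-positive'' convention). A vertical unit segment contained in the vertical strip then has $y$-range $[a,a+1]$ with $a\ge 0$; if $0<a<\varepsilon$, its lower endpoint lies strictly \emph{inside} the crossing square, and a horizontal segment at height $y_h<a$ whose $x$-range covers that abscissa is a non-neighbour which becomes a neighbour the moment you extend the vertical segment downward. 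So ``the intersection pattern inside the central square is already fixed'' is precisely the assertion that needs proof, and it fails for an inward-pointing extension; likewise ``the extensions only add points \dots{} in a region disjoint from the strips'' is wrong for that choice, since the downward extension sweeps through the crossing square and hence through the horizontal strip.

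The missing one-line observation is: because each unit segment has length $1>\varepsilon$ and is confined to its strip, and the strip leaves the crossing square on only one side, the segment's far endpoint already lies \emph{outside} the crossing square; hence its trace on the square is an interval anchored at the corner, and extending it outward through that far endpoint (upward for vertical, rightward for horizontal segments in the orientation above) changes nothing inside the square, while the added portion is disjoint from the other strip. This is exactly where the corner structure differs from the $(1+\varepsilon)\times(1+\varepsilon)$ square of Theorem~\ref{thm:orgineps}: there the segments may leave the central region on either of the two sides, both extension directions occur, and one obtains all four ray orientations, i.e.\ all ORGs. If the extension direction were genuinely arbitrary, as your write-up suggests, the two statements would describe the same class, which they do not (2-DORGs form a proper subclass of ORGs, cf.\ Proposition~\ref{lem: nonorgcycles}). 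With the direction tied to the geometry of the corner, your argument goes through and is the paper's.
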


\subsubsection{The square of area almost 1} \label{ssec: unitsquare}

One can further ask what UGIGs can be drawn into a minimal non-trivial rectangle, i.e. a square of size $(1+\varepsilon)\times (1+\varepsilon)$, with $0<\varepsilon<1$. We will further require, for ease of computation, that the square be open.

We start with the following lemma:

\begin{lem}

Given $G$ a UGIG with a representation fitting inside the open $(1+\varepsilon)\times (1+\varepsilon)$ square, it is possible to alter the drawing so that the central $(1-\varepsilon)\times (1-\varepsilon)$ square in the given representation space remains empty.

\end{lem}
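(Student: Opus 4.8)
The plan is to set up coordinates so the open $(1+\varepsilon)\times(1+\varepsilon)$ square is $(0,1+\varepsilon)^2$, and to argue that the central square $(\varepsilon,1)\times(\varepsilon,1)$ can be emptied by a bounded number of sweeping/perturbation moves of the kind already used in the proof of Proposition~\ref{prop:canon}. The key structural observation is the one exploited throughout this section: a horizontal unit segment in $(0,1+\varepsilon)^2$ either has its left endpoint in the strip $(0,\varepsilon]$ or its right endpoint in the strip $[1,1+\varepsilon)$ (or both), because a unit-length segment inside an interval of length $1+\varepsilon$ leaves a total slack of only $\varepsilon$ at the two ends; symmetrically for vertical segments with the bottom strip $(0,\varepsilon]$ and top strip $[1,1+\varepsilon)$. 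So each segment ``belongs'' to at least one of the four boundary strips, and the central region is only ever crossed, never spanned-and-ended, by any segment.

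First I would classify the horizontal segments into those anchored to the left strip and those anchored to the right strip (choosing one side arbitrarily for a segment that reaches both), and likewise split the vertical segments into bottom-anchored and top-anchored. Next, for the left-anchored horizontal segments I would push their left endpoints leftward, as far as the arrangement of $x$-coordinates permits, without changing which pairs of projected intervals overlap --- this is exactly the left-to-right sweep of Proposition~\ref{prop:canon} restricted to that sub-family --- so that all these segments lie entirely in $(0,1)\times(0,1+\varepsilon)$, i.e. avoid the vertical slab $x\in[1,1+\varepsilon)$; symmetrically push right-anchored horizontal segments to avoid $x\in(0,\varepsilon]$, and push the top- and bottom-anchored vertical segments out of the corresponding $y$-slabs. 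After these four monotone pushes, I would check that the central square $(\varepsilon,1)\times(\varepsilon,1)$ contains no segment point at all: a point of a left-anchored horizontal segment has $x<1$ but, after the push, the whole segment sits in the left part; more carefully, a segment can only meet the central square if it is long enough to reach from one boundary strip into $(\varepsilon,1)$, and the anchoring dichotomy together with the pushes forces every segment into one of the four $L$-shaped border regions $\bigl((0,1+\varepsilon)^2\bigr)\setminus(\varepsilon,1)^2$.

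The main obstacle I anticipate is \emph{consistency of the four pushes}: moving horizontal left endpoints leftward is harmless for the horizontal--horizontal overlap pattern and for every horizontal--vertical incidence whose vertical partner stays put, but one must verify that the simultaneous vertical pushes do not destroy an incidence the horizontal push was relying on, and vice versa. I would handle this by observing that the $x$-coordinate sweep touches only $x$-coordinates and the $y$-coordinate sweep only $y$-coordinates, so the two are independent (as in Proposition~\ref{prop:canon}, which processes the axes separately); a horizontal segment $h$ and a vertical segment $v$ with $h\cap v\neq\emptyset$ have their crossing point determined by the $x$-coordinate of $v$ and the $y$-coordinate of $h$, and each push keeps that crossing point inside the (moved) segments because the pushes are monotone and stop precisely when an overlap/disjointness constraint would change. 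A secondary point to dispatch is that a segment reaching \emph{both} opposite strips (length $1$ in an interval of length $1+\varepsilon$ already nearly fills it) is automatically disjoint from the central square along that axis, so such segments need no push at all. Finally, since the perturbation/push arguments only ever change non-integral parts of coordinates and never the combinatorial arrangement, the altered drawing represents the same graph $G$, completing the proof.
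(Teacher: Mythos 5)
Your approach has a fatal geometric flaw: you only ever translate segments along their own direction (horizontal segments in $x$, vertical segments in $y$), and no such translation can remove a segment from the central square. Place the open square as $(0,1+\varepsilon)^2$ with $0<\varepsilon<1$. A horizontal unit segment $[x_0,x_0+1]\times\{y\}$ inside it satisfies $0<x_0$ and $x_0+1<1+\varepsilon$, hence $x_0\in(0,\varepsilon)$ and $x_0+1\in(1,1+\varepsilon)$: its $x$-projection contains $[\varepsilon,1]$ in \emph{every} admissible position. So your anchoring ``dichotomy'' is vacuous (every horizontal segment reaches both the left and the right strip), and the stated goal of your push --- making a horizontal segment ``lie entirely in $(0,1)\times(0,1+\varepsilon)$'', i.e.\ avoid the slab $x\in[1,1+\varepsilon)$ --- is impossible, since it would force the left endpoint to $x\le 0$. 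Most importantly, a horizontal segment whose $y$-coordinate lies in $(\varepsilon,1)$ meets the central square $(\varepsilon,1)\times(\varepsilon,1)$ no matter how far it is translated horizontally, so your four along-axis pushes cannot empty the central square.

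The missing idea is that each segment must be slid \emph{perpendicular} to its own direction, and the whole content of the lemma is why this is legal. The paper's argument: the neighbours of a horizontal segment $v$ are vertical unit segments whose $y$-projections are unit intervals sharing the point $y_v$; their common intersection equals that of the two extremal ones, and since the union of two unit intervals inside $(0,1+\varepsilon)$ has length $<1+\varepsilon$, their intersection has length $>1-\varepsilon$ and therefore cannot be contained in the central interval $(\varepsilon,1)$ of length $1-\varepsilon$ --- it reaches a boundary strip. Sliding $v$ vertically to such a $y$-value preserves all adjacencies with its neighbours (and one checks that no new crossings arise, essentially because a non-neighbour forcing a conflict would already pin $v$ to a strip), after which $v$ avoids the central square. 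Your proposal contains no counterpart of this perpendicular move, so the gap is not a matter of bookkeeping the four pushes consistently; the pushes themselves are the wrong operation.
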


\begin{proof}

Let $v$ be  a vertex of $G$. Then, since it is a unit segment representation, the total overlap of the segments corresponding to its neighbours is the same as the overlap of the two extremal ones. Since the union of these two segments is strictly smaller than the square size, namely $1+\varepsilon$, their intersection must be strictly larger than $1-\varepsilon$. Hence, the segment of $v$ can be slided alongside its neighbours  until reaching either the left or  right (upper and lower respectively) strip of size $\varepsilon$ of the large square. Repeating the procedure for all vertices of $G$, we reach the desired configuration. 

\begin{figure}[htbp]

\centering

\includegraphics[width=4cm]{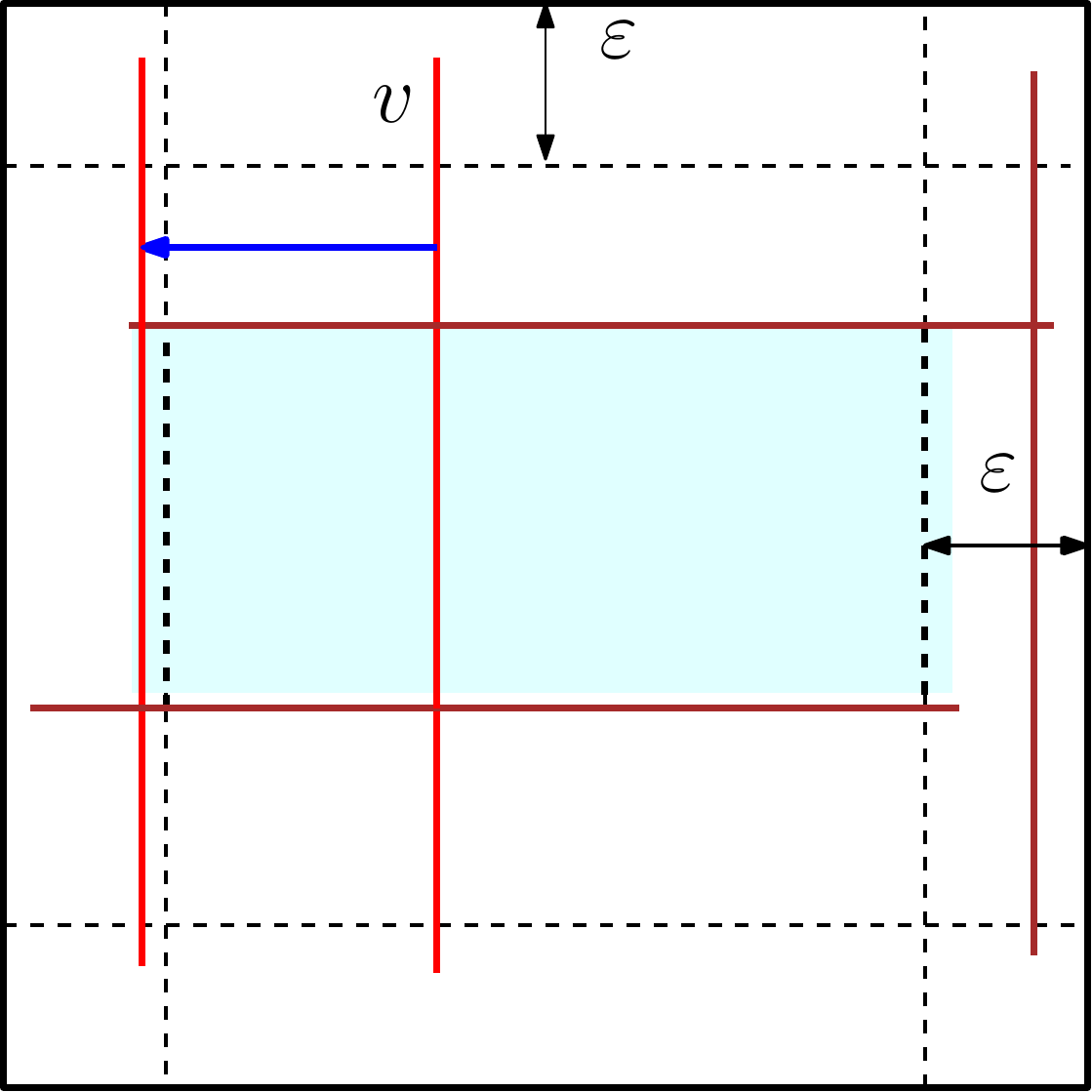}

\caption{Obtaining an $\varepsilon$-frame representation}

\label{epsframe}

\end {figure}

\end{proof}

Note that in the new representation all segment intersections appear in the four $\varepsilon \times \varepsilon$ corners and all the segments can be distinguished according to which strip of the frame they appear. This appears analogous to the two orientations per direction and four intersection types that characterize orthogonal ray graphs, and is no coincidence:

\begin{thm} \label{thm:orgineps}

The UGIGs that can be represented inside an open $(1+\varepsilon)\times (1+\varepsilon)$ square are exactly the orthogonal ray graphs. 

\end{thm}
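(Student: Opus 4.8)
The plan is to prove the two inclusions separately. For the easy direction, I would observe that every orthogonal ray graph is by definition a UGIG (Proposition~\ref{prop:orginugig}), and that the argument of Proposition~\ref{prop:orginugig} in fact places all intersection points inside an arbitrarily small square; padding this square out to side length $1+\varepsilon$ (with the unit segments sticking out on all four sides) shows that every ORG has a representation inside the open $(1+\varepsilon)\times(1+\varepsilon)$ square. So the content is entirely in the converse: a UGIG drawn in the open $(1+\varepsilon)\times(1+\varepsilon)$ square must be an ORG.

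For the converse I would start from the $\varepsilon$-frame representation guaranteed by the preceding Lemma: after sliding, every unit segment lies in one of the four $\varepsilon$-wide strips of the frame (left/right vertical strips, top/bottom horizontal strips), all intersection points lie in the four $\varepsilon\times\varepsilon$ corner squares, and no segment crosses the empty central $(1-\varepsilon)\times(1-\varepsilon)$ square. Thus the vertical segments split into two types --- those hugging the left strip and those hugging the right strip --- and similarly the horizontal ones split into a top type and a bottom type; a vertical segment and a horizontal segment can intersect only if their strips share a corner, and each such pair of strips meets in exactly one corner. The goal is to turn each unit segment into a ray. For a left-strip vertical segment I would extend it \emph{downward} to a half-line, for a right-strip vertical segment extend it \emph{upward}; for a bottom-strip horizontal segment extend \emph{leftward}, for a top-strip horizontal segment extend \emph{rightward} (or whichever consistent choice of the four ray-orientations matches the corner-incidence pattern). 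The key point to check is that these extensions create no new intersections: an extended vertical half-line stays within its $\varepsilon$-strip and runs off toward infinity away from the two corners of the opposite vertical strip, so it can only possibly meet horizontal segments in the one corner it already reached, where the incidences are exactly those of the original unit-segment representation; an entirely symmetric statement holds for horizontal half-lines, and two parallel half-lines in the same or opposite strips never meet since the original parallel unit segments were disjoint and the strips have width $\varepsilon<1$. Hence the resulting family of axis-parallel half-lines is an orthogonal ray representation of $G$, so $G$ is an ORG.

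The main obstacle I expect is the bookkeeping in the ``no new intersection'' step: one has to be careful that the direction chosen for the rays is globally consistent with the corner in which each pair of perpendicular strips meets, so that extending a segment toward infinity genuinely moves it away from every corner other than the one(s) it is supposed to reach. Concretely, one must verify that for the four strip-pairs (left$\times$bottom, left$\times$top, right$\times$bottom, right$\times$top) the four ray-orientation choices can be made simultaneously --- this is the same four-orientations/four-intersection-types combinatorics already noted for ORGs --- and that a segment which in the frame representation happened to reach two corners (a vertical segment long enough to touch both the bottom-left and top-left corners) still behaves correctly; since the segment has unit length and the frame has side $1+\varepsilon$ with $\varepsilon<1$, a vertical segment can reach at most the two corners on its own side, and extending it downward (for the left strip) keeps its incidence with the bottom corner while the top-corner incidences are retained because the original segment already covered that corner --- so one should double-check that the ray still contains the whole original segment, which it does by construction. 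Once this case analysis is done, the two inclusions combine to give the theorem.
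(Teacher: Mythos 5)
Your forward direction (ORG $\Rightarrow$ representable in the open square) is fine and matches the paper. The converse, however, has a genuine gap in its key step. After the $\varepsilon$-frame normalization a vertical unit segment does not sit near just one corner of its strip: since the square has side $1+\varepsilon$ with $\varepsilon<1$ and the segment has length exactly $1$, its lower endpoint necessarily lies in the bottom $\varepsilon$-strip and its upper endpoint in the top $\varepsilon$-strip, so it already reaches \emph{both} corners on its side (and symmetrically every horizontal segment stretches from the left strip to the right strip). Concretely, a left-strip vertical segment $v$ at abscissa $x_v$ with $y$-range $[y_v,y_v+1]$ meets a bottom-strip horizontal segment $h$ with left endpoint $(x_h,y_h)$ iff $x_h\le x_v$ and $y_h\ge y_v$, and meets a top-strip horizontal $h'$ iff $x_{h'}\le x_v$ and $y_{h'}\le y_v+1$. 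If you extend $v$ downward to a half-line, the condition $y_h\ge y_v$ evaporates and $v$ becomes adjacent to \emph{every} bottom horizontal with $x_h\le x_v$, creating spurious edges; extending upward destroys the top-corner condition instead. Since the two thresholds $y_v$ and $y_v+1$ are rigidly linked by the unit length, no single vertical ray anchored at one point can encode both, so the ``extend each segment to a ray of the same direction'' plan cannot be repaired by a cleverer choice of orientations.

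The paper's proof avoids this by a \emph{transposed} correspondence: a left-strip (resp.\ right-strip) vertical segment becomes a \emph{horizontal} ray pointing left (resp.\ right), anchored essentially at the segment's lower endpoint, while a bottom-strip (resp.\ top-strip) horizontal segment becomes a vertical ray pointing down (resp.\ up). One then checks that the leftward ray of $v$ at height $y_v$ meets the downward ray of $h$ anchored at $(x_h,y_h)$ iff $x_h\le x_v$ and $y_v\le y_h$, and meets the upward ray of $h'$ anchored at height $y_{h'}-1$ iff $x_{h'}\le x_v$ and $y_{h'}\le y_v+1$ --- exactly the segment conditions above. This single-anchor, direction-swapping encoding is the missing idea; with it both inclusions follow from one explicit bijection between ray endpoints and unit segments.
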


\begin{proof}

Let $G$ be a graph admitting an orthogonal ray graph representation, with the four partitions corresponding to the infinite directions being $H_l, H_r, V_u$ and $V_d$. One can pick a drawing of $G$ where all intersection points lie inside the square of coordinates $(\delta, \varepsilon-\delta) \times (\delta, \varepsilon-\delta)$ with $0 < \delta < \varepsilon$. 

For each $v \in V(G)$, we now define the following one-to-one correspondence between the endpoints of halflines and segments:

\[ (x_v,y_v) \leftrightarrow \begin{cases}

(x_v+1,y_v)\ldots(x_v+1,y_v+1), \ v\in H_r

\\

(x_v,y_v)\ldots(x_v,y_v+1), \ v\in H_l

\\

(x_v,y_v+1)\ldots(x_v+1,y_v+1), \ v\in V_u

\\

(x_v,y_v)\ldots(x_v+1,y_v), \ v\in V_d

\end{cases} \]

It is a simple exercise to see that the above bijection preserves the graph structure. 

\end{proof}

\begin{figure}[htbp]

\centering

\includegraphics[width=8cm]{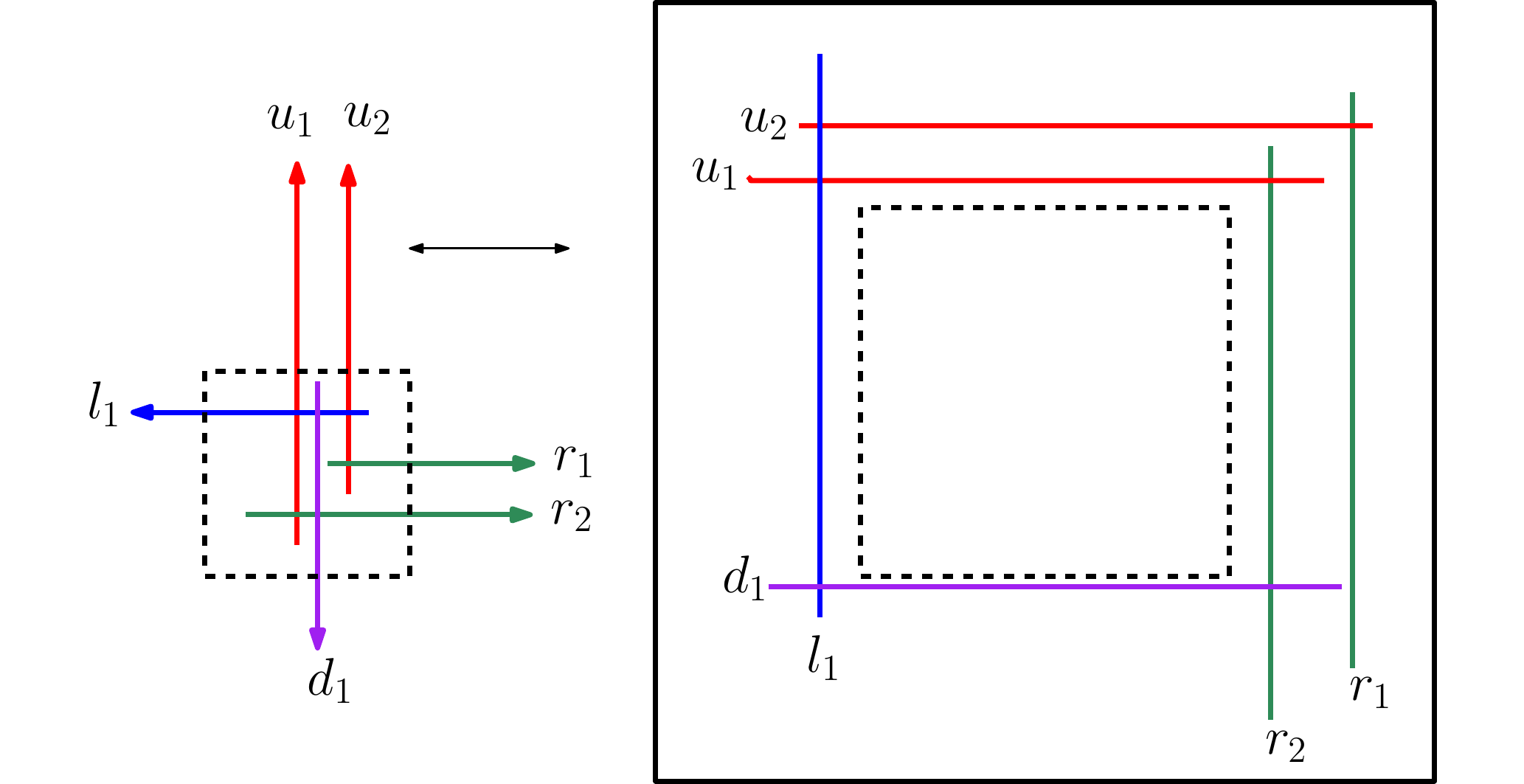}

\caption{Equivalence between ORGs and UGIGs fitting an almost unit square}

\label{orgcorresp}

\end {figure}

\subsubsection{Cycles}\label{ssec: cycles}

All cycles are UGIGs and can be represented inside a rectangle of size $(2+\varepsilon) \times (1+\varepsilon), 0<\varepsilon<1$, as it can be seen in Figure \ref{cycles}.


\begin{figure}[htbp]

\centering

\includegraphics[width=3.5cm]{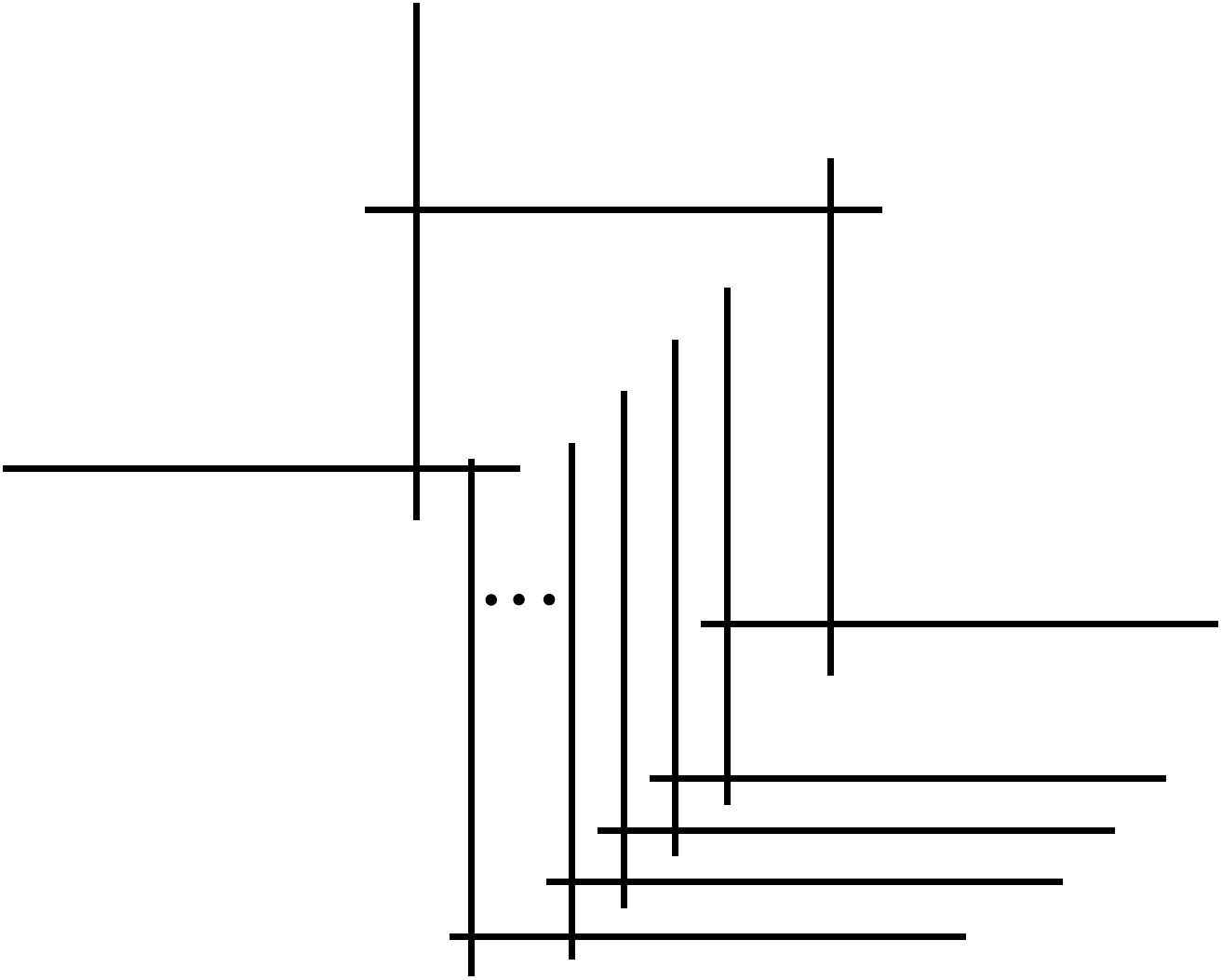}

\caption{Cycles as UGIGs}

\label{cycles}

\end {figure}

 According to proposition ~\ref{prop:orginugig} from section ~\ref{ssec:comporg}, for all $\mathcal{C}_{2n}$ with $n \geq 7$, this grid size is also necessary, since only orthogonal ray graphs fit in an $(1+\varepsilon)\times (1+\varepsilon)$  square. 

 Figure ~\ref{fig:orginugig}, depicting $\mathcal{C}_{12}$, also suggests how to represent the smaller cycles as UGIGs fitting in a $(1+\varepsilon) \times (1+\varepsilon)$ square: at every desired step, delete a concave corner and prolonge two circularly oriented half-lines until they meet. 


 \begin{prop}

 A cycle $C_{2n}$ can be optimally represented in a grid of size $(1+\varepsilon) \times (1+\varepsilon)$ if and only if $n\leq 6$, otherwise a $(2+\varepsilon) \times (1+\varepsilon)$ rectangle is necessary and sufficient. 

 \end{prop}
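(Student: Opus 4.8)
The statement splits naturally into a sufficiency part and a necessity part, and both directions essentially package results already proved in the excerpt.

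\textbf{Sufficiency.} For the ``if'' direction I would first treat the case $n \le 6$: here I exhibit an explicit $(1+\varepsilon)\times(1+\varepsilon)$ representation. Rather than drawing all six cases by hand, I would invoke the remark already made after Figure~\ref{fig:orginugig}: starting from the obvious ORG-style drawing of $C_{12}$ (which, by Theorem~\ref{thm:orgineps}, fits in an almost-unit square since every cycle $C_{2n}$ with $n\le 6$ is an ORG), one repeatedly ``deletes a concave corner and prolongs the two circularly oriented half-lines until they meet''; each such operation removes two vertices and yields a valid ORG representation of the smaller even cycle, hence by Theorem~\ref{thm:orgineps} a UGIG representation inside the open $(1+\varepsilon)\times(1+\varepsilon)$ square. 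This covers $C_4, C_6, C_8, C_{10}, C_{12}$; for completeness I would also note that $C_3$ is not bipartite (so not relevant) and any remaining small even cycle is handled the same way. For $n\ge 7$, sufficiency of the $(2+\varepsilon)\times(1+\varepsilon)$ rectangle is exactly the explicit ``staircase'' construction of Figure~\ref{cycles}, which works uniformly for all $n$.

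\textbf{Necessity.} For the ``only if'' direction, suppose $n \ge 7$ and $C_{2n}$ had a representation inside an open $(1+\varepsilon)\times(1+\varepsilon)$ square. By Theorem~\ref{thm:orgineps}, every UGIG fitting in such a square is an orthogonal ray graph, so $C_{2n}$ would be an ORG. But Proposition~\ref{lem: nonorgcycles} says precisely that $C_{2n}$ is not an ORG for $n\ge 7$, a contradiction. Hence no $(1+\varepsilon)\times(1+\varepsilon)$ representation exists, and since $C_{2n}$ is a UGIG (as a bipartite planar graph, via \cite{HNZ}, or directly from Figure~\ref{cycles}) and does fit in $(2+\varepsilon)\times(1+\varepsilon)$, the latter rectangle is both necessary and sufficient for $n\ge 7$. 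One should also observe that $(2+\varepsilon)\times(1+\varepsilon)$ is genuinely the ``next'' grid size in the sense intended (the word \emph{optimally} refers to minimising the larger side), so no intermediate rectangle needs to be ruled out separately.

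\textbf{Main obstacle.} The only real work is the sufficiency claim for $n\le 6$: one must be sure that iteratively contracting corners in the $C_{12}$ picture always produces a legitimate ORG (equivalently UGIG-in-a-square) representation and never creates a spurious crossing or destroys an intended adjacency. I would discharge this by checking that the ``delete a concave corner, extend the two neighbouring rays until they meet'' move preserves exactly the incidences of the contracted cycle — essentially a one-line local argument — together with the base fact that $C_{12}$ itself is an ORG (all cycles $C_{2n}$, $n\le 6$, are ORGs, which is the companion statement to Proposition~\ref{lem: nonorgcycles}). Everything else is a direct citation of Theorems and Propositions already established above.
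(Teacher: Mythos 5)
Your proposal follows essentially the same route as the paper: the paper likewise obtains necessity for $n\ge 7$ by combining Theorem~\ref{thm:orgineps} (UGIGs in the almost-unit square are exactly ORGs) with Proposition~\ref{lem: nonorgcycles} (no ORG representation of $C_{2n}$ for $n\ge 7$), uses the staircase drawing of Figure~\ref{cycles} for sufficiency of the $(2+\varepsilon)\times(1+\varepsilon)$ rectangle, and handles $n\le 6$ by the same ``delete a concave corner and prolong the two half-lines'' reduction from the $C_{12}$ picture. Your explicit attention to verifying that the corner-deletion move preserves the cycle's incidences, and that $C_{12}$ is itself an ORG, only makes precise what the paper leaves implicit.
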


\subsubsection{Trees and unboundedness}\label{ssec:unboundtrees}

 Above, we have seen UGIG subclasses that fit in fixed-size rectangles. A natural question is whether there is a grid size which accomodates all unit grid intersection graphs. We will show that the answer is negative even when restricting our analysis to trees.

  In the sequel we define the \emph{boundary size} as the semiperimeter of the bounding rectangle.

 We recursively construct a family of trees $\left\{T_n\right\}_{n\geq 2}$ as follows: $T_2$ is the star with 17 vertices, and for any $n>2$, we define $T_n$ as a tree with a vertex distinguished as root, to which $16n+1$ children are attached, with each of these children adjacent to a copy of $T_{n-1}$.

 \begin{figure}[htbp]

\centering

\includegraphics[width=11cm]{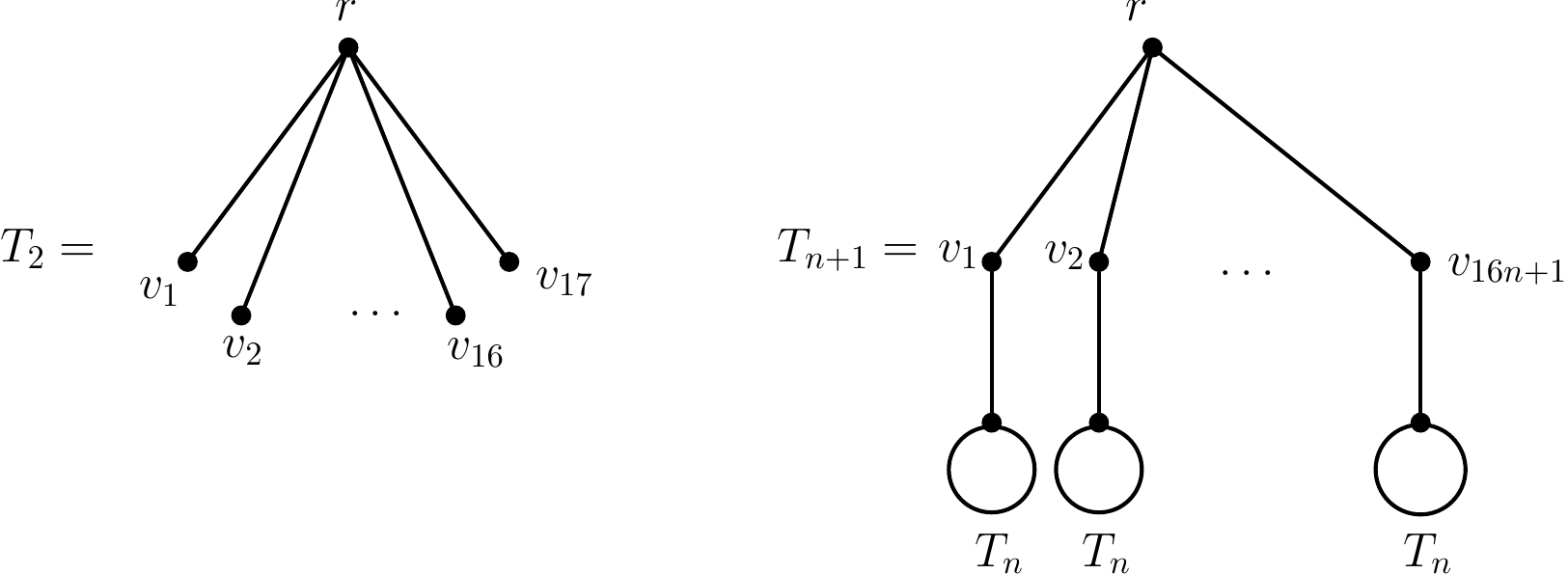}

\caption{Constructing the family of trees}

\label{tfamily}

\end {figure}

 \begin{thm} \label{thm:treeunbound}

 For all $n\geq 2$, a UGIG representation of $T_n$ needs a boundary size of at least $n$.

 \end{thm}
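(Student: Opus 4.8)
The plan is to proceed by induction on $n$, showing that any UGIG representation of $T_n$ forces the bounding rectangle to have semiperimeter at least $n$. The base case $n=2$ is essentially immediate: $T_2$ is the star $K_{1,16}$ (with $17$ vertices), and any unit-segment representation of a star with sufficiently many leaves — here the $16$ leaves of one orientation all overlapping a single unit segment of the other orientation — requires... well, actually even a single unit segment has semiperimeter $1$, so the base case only needs the trivial bound. The real content is in the inductive step, and the large branching numbers ($16n+1$ children, $17$ leaves) are there precisely to make a pigeonhole argument go through at each level.

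For the inductive step, suppose every UGIG representation of $T_{n-1}$ needs boundary size at least $n-1$, and fix a representation $\mathcal{R}$ of $T_n$ inside some bounding rectangle $B$. The root $r$ is a unit segment, say vertical, occupying a unit-height vertical strip. Its $16n+1$ children are horizontal unit segments, each crossing $\mathcal{R}(r)$; grouping them by which side of $\mathcal{R}(r)$ their "free" part (the part not forced to overlap near $r$) extends to, and further refining, by pigeonhole many of these children have their attached copies of $T_{n-1}$ sitting essentially "to one side." More carefully: each child $c$ is a horizontal segment of length $1$ crossing a vertical strip of width at most... here one uses that $\mathcal{R}(r)$ has unit length, so a horizontal unit segment meeting it has at least roughly half its length sticking out on one side; by pigeonhole a constant fraction (at least $8n+1$, say) of the children stick out on the same side, w.l.o.g. the right. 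The subtree $T_{n-1}$ hanging off such a child $c$ must be represented, and crucially its representation cannot be "squeezed" near $\mathcal{R}(r)$: I would argue that the copy of $T_{n-1}$ must use horizontal extent to the right of $\mathcal{R}(r)$'s strip, or vertical extent beyond $\mathcal{R}(c)$'s strip, and the branching is large enough that not all these copies can be packed into a bounded region near the corner where $\mathcal{R}(r)$ and $\mathcal{R}(c)$ meet.

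The key mechanism is a separation / disjointness argument: distinct copies of $T_{n-1}$ attached to distinct children $c, c'$ are vertex-disjoint and non-adjacent in $T_n$, so their representations are disjoint subsets of the plane that moreover cannot be "linked" around each other — the horizontal segments $\mathcal{R}(c), \mathcal{R}(c')$ and the vertical $\mathcal{R}(r)$ constrain how the copies nest. Among the many children sticking out to the right, order them and look at the copies of $T_{n-1}$ they carry; I expect that at most a bounded number (a constant independent of $n$, coming from the width $\varepsilon$-free constraints on unit segments and the fact that these copies must avoid each other) can have their bounding boxes "close to" the strip of $\mathcal{R}(r)$, so some copy of $T_{n-1}$ has its bounding box pushed a distance bounded below away, and by the induction hypothesis that copy already needs semiperimeter $n-1$; adding the extra offset (at least $1$, from having to clear $\mathcal{R}(r)$ or a sibling's segment) gives semiperimeter at least $n$ for $B$. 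The large constants $16n+1$ and $17$ are chosen so that after the successive pigeonholing (by orientation of $r$, by side, and by which copies crowd the corner) at least one "far" copy of $T_{n-1}$ always survives.

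The main obstacle, and the step I would spend the most care on, is making precise the claim that the copies of $T_{n-1}$ cannot all be packed near the corner — i.e., quantifying how much room each nested copy "costs." This is where one genuinely uses the unit-length constraint: a subtree hanging off a horizontal unit segment $\mathcal{R}(c)$ can only interact with the rest of the picture through the part of $\mathcal{R}(c)$ not pinned down near $\mathcal{R}(r)$, and that part, together with the unit-length vertical segments of $T_{n-1}$'s next level, forces either horizontal growth past $\mathcal{R}(r)$ or vertical growth, and two such subtrees competing for the same region must "stack," each adding a nonzero increment. Turning this intuition into a clean monotone quantity (e.g., a potential measuring the furthest extent of the representation in each of the four directions) that provably increases by at least $1$ per level of recursion is the crux; the rest is bookkeeping with the pigeonhole counts.
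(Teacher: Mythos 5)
Your skeleton --- induction on $n$ plus a cascade of pigeonhole steps exploiting the $16n+1$ branching --- is the same as the paper's, but the quantitative engine you propose for the inductive step does not work, and it is precisely the step you yourself flag as "the crux." You want to find one copy of $T_{n-1}$ whose bounding box is pushed at least one unit away from $\mathcal{R}(r)$ (or from a sibling) and then add that offset to the inductive semiperimeter $n-1$. But every copy of $T_{n-1}$ is rooted at a grandchild of $r$: a unit segment crossing a unit segment that crosses $\mathcal{R}(r)$. Hence every copy's bounding box meets the $2\times 3$ neighbourhood of $\mathcal{R}(r)$, no copy can be separated from the root's strip by a full unit in either coordinate, and an offset strictly smaller than $1$ yields semiperimeter strictly smaller than $n$. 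There is no "far copy" to be found, so the pigeonholing "by which copies crowd the corner" has nothing to deliver.

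The paper resolves this with a dichotomy you do not have. Fix the second-lowest of the selected length-two paths and call its vertical grandchild segment $l$. Either (a) some attached copy of $T_{n-1}$ does \emph{not} pass above $l$, in which case the induction hypothesis applies to that confined copy and the structure above $l$ contributes one extra unit of height disjoint from it; or (b) all $2n+1$ surviving copies must route around $l$, and then a stacking lemma takes over: to pass beneath the currently lowest horizontal segment, a path must place a vertical segment strictly below it, and the unit-length constraint quantizes each such descent to a full unit, so consecutive escaping paths satisfy $y_{\min}(P_{2k+1}) < y_{\min}(P_{2k}) - 1$ and the $2n+1$ paths alone force $n$ units of height --- note that in this branch the induction hypothesis is not used at all. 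Your closing intuition ("two such subtrees competing for the same region must stack, each adding a nonzero increment") points at branch (b), but "nonzero" is not enough: without the unit quantization the increments could sum to less than $1$. A small additional issue: your base case asserts that "the trivial bound" of semiperimeter $1$ suffices, whereas the statement requires $2$ for $n=2$; the correct (still easy) observation is that a star of crossing unit segments needs at least one unit of extent in each of the two directions.
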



\begin{proof}

 We will proceed inductively:
 The base case $n=2$ is clear, as at least one unit is needed in both the horizontal and the vertical direction.

  For the induction step, we apply the box principle several times. Out of the $16n+1$ children of the root, at least $8n+1$ have their children either all above the root, or all below. Without loss of generality, we assume the latter is the case. Out of these $8n+1$ nodes, either at least $4n+1$ have children with an endpoint to the left of the root, or $4n+1$ have children to the right of the root. W.l.o.g. we consider the latter case. Restricting our analysis to these $4n+1$ nodes and their successors, we see that the paths of length two descending from the root form a nested structure, similar to the one in the following picture:

  \begin{figure}[htbp]

\centering

\includegraphics[width=4.5cm]{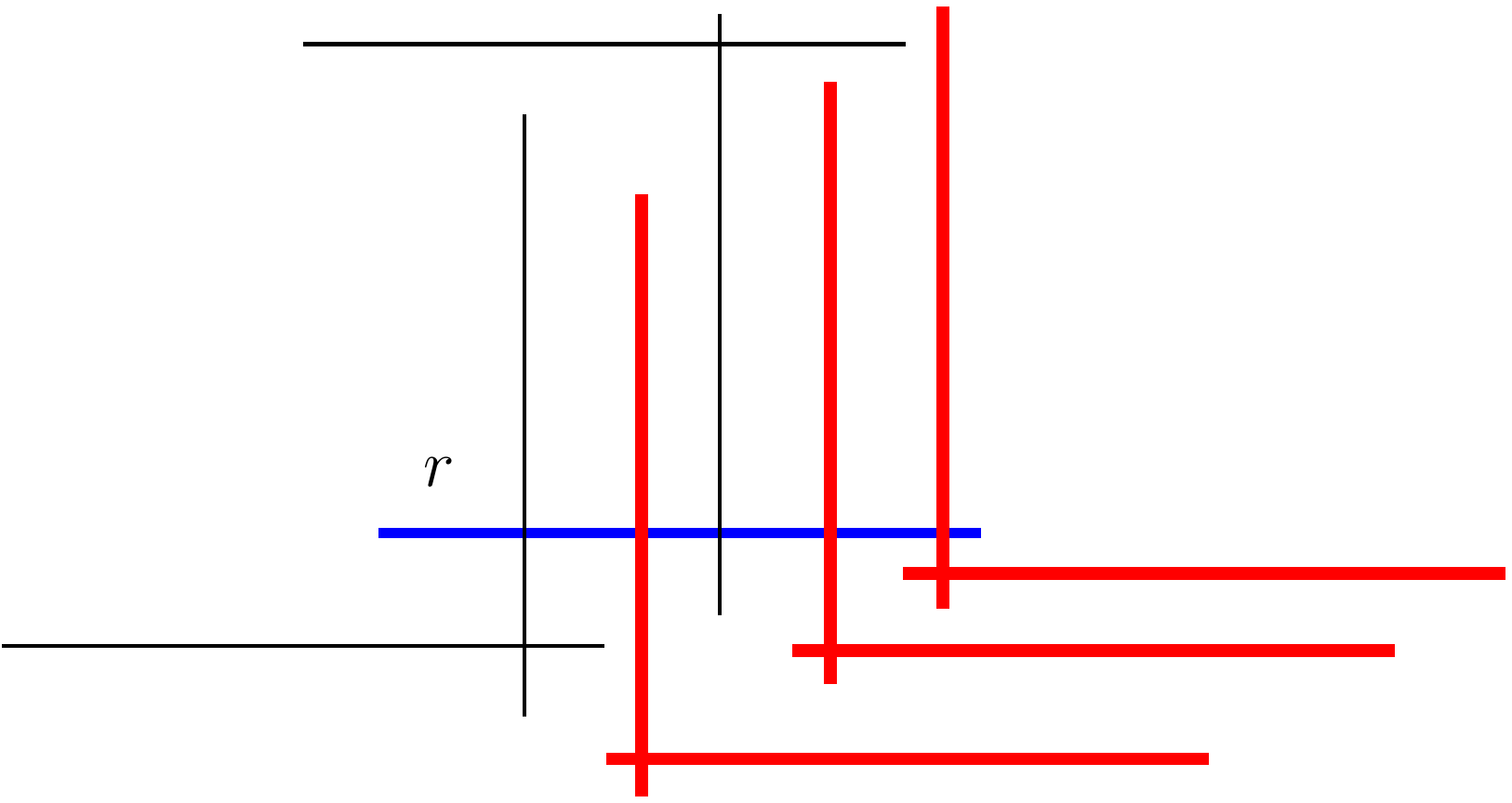}

\caption{Selecting the nested paths}

\label{selvert}

\end {figure}

Consider the second lowest lying child of the root and let $l$ be its descendant. We distinguish two cases:

\begin{itemize}

\item

At least one of the attached $T_n$s  does not go above  $l$. Then, from the induction hypothesis, we need a boundary size of at least $n$ for the copy of $T_n$ and an extra unit in the vertical direction that lies above $l$. Hence, in total, a rectangle of semiperimeter at least $n+1$ is necessary.

\item All attached $T_n$s go above $l$. This can only happen via a vertical segment to the right or to the left of $l$. We apply the box principle again, to conclude there exist $2n+1$ root children whose descending $T_n$s reach above $l$ from the right (the other case is analogous).

\begin{lem}

Let $u_1, \ldots u_{2n+1}$ be the top-down ordering of the descendants of the above $2n+1$vertices and $P_1, \ldots P_{2n+1}$ the corresponding paths that go above $l$. Then $y_{min} (horizontal(P_{2k})) < y_{min} (P_{2k-1})$  and $y_{min} (P_{2k+1}) < y_{min} (P_{2k})-1$ for all $k\geq 1$ (here $y_{min}(P_i)$ denotes the smallest $y$ coordinate of the path $P_i$). 

\end{lem}

\begin{proof}

Note that at every even step, a horizontal segment is added strictly below the current lowest vertical one.

The second statement follows from the fact that the currently lowest horizontal segment can only be passed by a path from left to right, if this path has a vertical segment lying strictly below, otherwise the unit segment condition would be violated. The previous step ensures the depth increasing by 1.

  \begin{figure}[htbp]

\centering

\includegraphics[width=6cm]{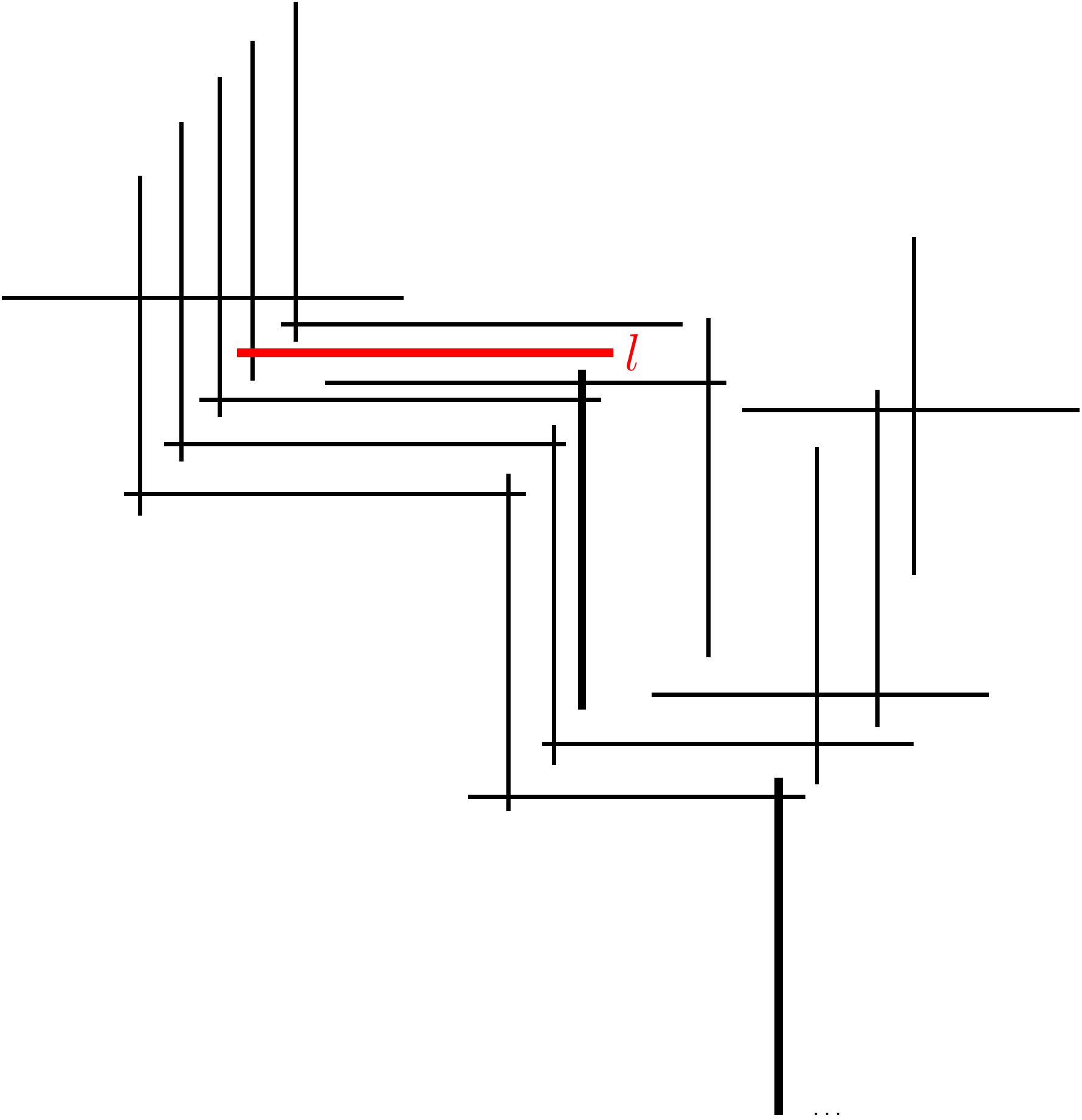}

\caption{Paths going above $l$}

\label{override}

\end {figure}

\end{proof}

The lemma above insures that the $2n+1$ paths would need at least extra $n$ units in height of the grid in order to go above $l$, hence again, proving the statement.

\end{itemize}

\end{proof}

\section{Recognition of unit grid intersection graphs} \label{sec:rec}

 This section is dedicated mainly to proving the following

 \begin{thm}\label{thm:nphard}

It is NP-hard to distinguish whether a given graph has a UGIG-representation or whether it does not even admit a representation by pseudosegments. Considering
any constant $k\in \mathbb{N}$, this holds even when restricting the analysis to graphs of girth at least $k$.

\end{thm}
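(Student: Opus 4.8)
The plan is to prove NP-hardness by a reduction from a suitable NP-complete problem, following the modification of the GIG-hardness argument of Kratochv\'\i l that is promised in the introduction. Concretely, I would reduce from a planar variant of a satisfiability or graph problem that already encodes the GIG-recognition hardness (the cross-freeability / grid-representation structure), and then adapt the gadgets so that: (a) every ``yes'' instance produces a graph that is in fact a UGIG (so the unit-length constraint can always be met), while (b) every ``no'' instance produces a graph that is not even a PSEG (so there is a big gap between the two sides of the dichotomy). The key conceptual point is that the unit-length restriction should never be the obstruction in the positive case: the gadgets must be designed so that whenever a pseudosegment (or segment, or grid) representation exists at all, it can be massaged into one with all segments of equal length. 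By Proposition~\ref{prop:treesareugigs} and the corner/almost-unit-square analysis of Section~3, there is already a good toolbox of graphs known to be UGIGs, and these can serve as rigid building blocks whose representations are forced up to the symmetries we want.

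The first concrete step is to isolate the ``wire'' and ``crossing'' gadgets from the original GIG-reduction and verify that, in the intended representation, each gadget's segments can be taken to have a common length; this typically means replacing a single long segment that carries information over a distance by a chain of overlapping unit segments (a path), which does not change the combinatorial type of the representation but does enforce unit length. Second, I would show the soundness direction: if the constructed graph $G_\varphi$ admits any pseudosegment representation, then the gadget rigidity forces the global ``grid-like'' combinatorial structure (left-right and top-bottom orderings consistent across all crossings), from which one reads off a satisfying assignment / valid grid embedding of the original instance; hence a ``no'' instance cannot be a PSEG. Third, the completeness direction: given a solution to the original instance, explicitly build a UGIG representation of $G_\varphi$ by placing the unit-segment chains along the grid lines dictated by the solution — here the earlier constructions (trees as UGIGs, cycles, the $\varepsilon$-frame trick) give the needed local realizations.

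For the girth refinement, the standard device is subdivision: replace every edge of $G_\varphi$ by a path of length $2\lceil k/2\rceil$ (or some fixed large even length) to kill all short cycles. The work is to check that subdivision is compatible with both directions of the reduction — that subdividing an edge of a UGIG keeps it a UGIG (true, since a single segment crossing another can be replaced by an overlapping chain of unit segments that alternate horizontal/vertical, exactly as in the tree construction and the cycle figures), and that subdivision does not accidentally make a ``no'' instance become a PSEG (this needs the usual argument that a PSEG representation of the subdivided graph would induce a PSEG, or at least a sufficiently structured, representation of the original, because each subdivided edge still behaves like a single topological arc between its endpoints up to bounded local modification). One must pick the subdivision length even and large enough that the parity and length of the cycles introduced by the gadgets themselves also exceed $k$.

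The main obstacle I anticipate is the soundness analysis under the weakest hypothesis, namely that $G_\varphi$ is merely a PSEG rather than a GIG or a UGIG: pseudosegments allow two curves to cross once but are otherwise topologically flexible, so the usual planarity/ordering arguments that pin down a grid structure in a GIG representation have to be redone in this more permissive setting. The fix is to choose gadgets whose forbidden-subgraph behaviour is robust — e.g., embedding copies of the non-PSEG configurations, or large complete-bipartite-like pieces together with long induced cycles, so that any PSEG representation is forced into essentially the grid combinatorics regardless of the extra freedom; concretely one leans on the fact, used already in Section~2 (the separating-edge argument for cycles, and the nested-cycle argument for $\mathcal S$), that long induced cycles and $K_{3,3}$-type pieces severely constrain even pseudosegment drawings. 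Getting these rigidity lemmas to go through uniformly for every class between UGIG and PSEG — i.e., making the single reduction serve the whole sandwich — is where most of the care will be needed, but it is exactly the ``sandwiching'' methodology of \cite{KP,Perm} adapted to this chain of classes.
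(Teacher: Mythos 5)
Your high-level framing is right --- a gap reduction from a planar SAT variant in which ``yes'' instances are UGIGs by explicit construction and ``no'' instances are not even PSEG, with girth handled by lengthening paths --- but the concrete plan has a gap at exactly the point the paper identifies as the crux. You propose to keep the crossing/cross-over gadget of the original GIG-reduction and then achieve large girth by subdividing edges. The paper's position is that this cannot work: the cross-over gadget (and its truth-splitter variant from \cite{KP}) inherently relies on short cycles, and no arbitrary-girth version of it seems to exist; subdividing it destroys the rigidity it is supposed to provide. The paper's actual innovation is to \emph{eliminate crossings altogether} by reducing from Planar-3-connected-3-SAT(4): 3-connectivity gives a unique planar embedding of the incidence graph, so occurrence gadgets never need to cross, and the cross-over gadget is replaced by a new, crossing-free variable gadget (two adjacent vertices $a,b$ drawn as two crossing unit segments, with each occurrence contributing a pair of paths attached one to $a$ and one to $b$ according to the circular order; each pair blocks one quadrant of the crossing point, so at most four occurrences fit and their left-right orientations are forced consistent, with a dummy occurrence padding variables that occur only three times). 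Your proposal never confronts this and would stall where the paper says the old construction breaks.

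Two smaller discrepancies. First, your soundness direction speaks of forcing a global grid-like structure in any PSEG representation and of ``embedding copies of the non-PSEG configurations''; the paper does neither. It imports from \cite{KP} the single fact that the clause gadget admits no pseudosegment representation precisely when all three incoming literals are oriented false, and exhibits explicit unit-segment drawings for the other seven cases; non-representability is conditional on the occurrence orientations, not on a hard-wired obstruction (which would ruin the ``yes'' direction). Second, girth is not obtained by a posteriori subdivision of the finished graph but by building the gadgets from arbitrarily long paths in the first place (the ``jigsaw'' paths in the clause and occurrence gadgets), which sidesteps your worry about whether subdividing a non-PSEG graph preserves non-PSEG-ness --- a worry you flag but do not resolve, and which is a real one, since subdivision can change representability within the string-graph hierarchy.
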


\begin{cor}

The recognition problem is NP-hard for any class containing UGIGs
which is simultaneously contained in class of pseudosegment-graphs. For
such a class the recognition problem remains NP-hard even if restricted
to graphs with arbitrarily large girth.

\label{dusl:jednoduchy}

\end{cor}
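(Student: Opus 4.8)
The plan is to obtain the corollary directly from Theorem~\ref{thm:nphard} by a standard \emph{sandwiching} argument, so that essentially no new work is required beyond unwinding the definitions. Let $\mathcal{C}$ be any graph class with $\mathrm{UGIG}\subseteq\mathcal{C}\subseteq\mathrm{PSEG}$. The key observation I would start from is that Theorem~\ref{thm:nphard} does not merely assert that UGIG-recognition is hard: it supplies a polynomial-time reduction from an NP-hard problem which, on every instance, outputs a graph $G$ that is \emph{either} a UGIG \emph{or} not even a pseudosegment graph, with no intermediate possibility.

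Given this, I would argue that the very same graph $G$ settles membership in $\mathcal{C}$. If $G$ is a UGIG, then $G\in\mathrm{UGIG}\subseteq\mathcal{C}$, so $G\in\mathcal{C}$; if $G$ is not a PSEG, then, since $\mathcal{C}\subseteq\mathrm{PSEG}$, we get $G\notin\mathcal{C}$. Thus a polynomial algorithm deciding whether a graph lies in $\mathcal{C}$ would decide the original NP-hard problem, composed with the reduction of Theorem~\ref{thm:nphard}, which is polynomial. Hence recognition of $\mathcal{C}$ is NP-hard.

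For the girth part, I would simply recall that Theorem~\ref{thm:nphard} already provides, for every fixed $k\in\mathbb{N}$, a reduction all of whose output graphs $G$ have girth at least $k$. Since the girth of $G$ is a property of $G$ alone and does not depend on the choice of $\mathcal{C}$, the argument of the previous paragraph applies verbatim to the recognition problem for $\mathcal{C}$ restricted to the class of graphs of girth at least $k$, giving NP-hardness there as well.

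The only point that deserves care --- and it is a matter of emphasis rather than a genuine obstacle --- is that everything hinges on the \emph{gap} (or promise) formulation of Theorem~\ref{thm:nphard}: the constructed instances must never fall strictly between UGIG and PSEG. It is precisely this property that makes the conclusion uniform over all classes $\mathcal{C}$ in the sandwich. So before invoking Theorem~\ref{thm:nphard} here I would make sure its statement and proof are phrased in exactly that gap form, and then the corollary follows with no further computation.
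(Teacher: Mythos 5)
Your proposal is correct and matches the paper's own reasoning exactly: the paper derives Corollary~\ref{dusl:jednoduchy} directly from the gap form of Theorem~\ref{thm:nphard} (the reduction outputs a graph that is either a UGIG or not even a PSEG), so any sandwiched class $\mathcal{C}$ inherits NP-hardness, and the girth claim carries over because the constructed graphs already have girth at least $k$. Nothing further is needed.
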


\begin{cor}

The recognition problem is NP-complete for UGIGs as well as for
USEG graphs and the property holds even when
restricted to graphs with arbitrarily large girth. \label{dusl:npcompleteness}

\end{cor}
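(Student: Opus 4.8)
The plan is to derive Corollary~\ref{dusl:npcompleteness} directly from Theorem~\ref{thm:nphard} (or equivalently from Corollary~\ref{dusl:jednoduchy}) together with two easy observations: that UGIG and USEG are sandwiched between UGIG and PSEG, and that membership in each of these classes is certifiable in polynomial time, i.e. the recognition problems lie in NP. Since Theorem~\ref{thm:nphard} already gives NP-hardness for \emph{every} class $\mathcal{C}$ with $\hbox{UGIG}\subseteq\mathcal{C}\subseteq\hbox{PSEG}$ — and from the excerpt $\hbox{UGIG}\subseteq\hbox{USEG}\subseteq\hbox{GIG}\subseteq\hbox{SEG}\subseteq\hbox{PSEG}$ — it only remains to establish the upper bound (membership in NP) for UGIG and USEG, and to note that the hardness reduction of Theorem~\ref{thm:nphard} produces, for any fixed $k$, instances of girth at least $k$, so the NP-completeness is inherited in the high-girth regime as well.

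The key steps, in order: First, invoke Corollary~\ref{dusl:jednoduchy} with $\mathcal{C}=\hbox{UGIG}$ and with $\mathcal{C}=\hbox{USEG}$; both satisfy $\hbox{UGIG}\subseteq\mathcal{C}\subseteq\hbox{PSEG}$, so both are NP-hard to recognize, even restricted to graphs of arbitrarily large girth. Second, show UGIG recognition is in NP. Here I would use Proposition~\ref{prop:canon}: any UGIG on $n$ vertices has a representation on the $(n+1)\times(n+1)$ grid with all coordinates multiples of $1/n$. Hence a representation can be encoded with $O(n\log n)$ bits, and a nondeterministic machine can guess the left endpoint of each horizontal segment and the bottom endpoint of each vertical segment (each a pair of rationals of the stated bounded form, with the unit-length constraint making the other endpoint determined), then verify in polynomial time that for every pair $\{u,v\}$ the corresponding segments cross if and only if $uv\in E$. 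Third, show USEG recognition is in NP by the same argument: one checks that the sweeping/canonization proof of Proposition~\ref{prop:canon} only ever slides endpoints and never alters which partition is the ``unit'' one, so a USEG graph likewise has a polynomial-size rational certificate (the unit partition has unit-length segments; the other partition has segments whose two endpoints are independently guessed rationals on the bounded grid, length unconstrained), again polynomially checkable. Fourth, combine: NP-hard plus in NP gives NP-complete, for both classes, and the girth-$k$ restriction is preserved on both the hardness side (by Theorem~\ref{thm:nphard}) and trivially on the membership side (a subproblem of a problem in NP is in NP). This yields the corollary.

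The main obstacle I anticipate is the NP-membership argument, specifically being careful that a polynomial-size certificate actually exists. The naive worry is that some UGIG might require representations with exponentially fine or exponentially large coordinates; Proposition~\ref{prop:canon} is exactly what rules this out, so the corollary genuinely depends on having that bound in hand (it is stated earlier, so I may assume it). A secondary subtlety is the verification step: given guessed endpoints, checking ``segment $s_u$ and segment $s_v$ intersect'' is a constant-time arithmetic test on rationals with $O(\log n)$-bit numerators and denominators, and there are $O(n^2)$ pairs, so verification is polynomial — but one should note explicitly that the unit-length requirement is itself part of what is verified (or enforced by construction) for the appropriate partition(s), so that the certificate really witnesses membership in UGIG (resp.\ USEG) and not merely in GIG. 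Modulo these bookkeeping points, the corollary is a short deduction.
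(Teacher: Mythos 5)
Your decomposition is exactly the paper's: NP-hardness comes from Corollary~\ref{dusl:jednoduchy} applied to UGIG and USEG (both sandwiched between UGIG and PSEG), the girth restriction is inherited from Theorem~\ref{thm:nphard}, and the only new content is NP-membership. Where you differ is in the certificate for membership. You guess explicit rational coordinates, justified by Proposition~\ref{prop:canon} (an $(n+1)\times(n+1)$ grid with coordinates that are multiples of $1/n$, hence $O(n\log n)$ bits, with an $O(n^2)$-pair intersection check). The paper instead uses a purely combinatorial certificate: the two linear orderings of segment endpoints along each axis, together with a check that neither partition contains a segment ``nested'' in another's strip, followed by a sweep that realizes any such proper ordering by unit segments --- the analogue of the classical equivalence of unit and proper interval graphs. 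The authors explicitly note that your coordinate-based route would also work and choose theirs only because the certificate is smaller/cleaner; both are $O(n\log n)$. Your version buys immediacy, since Proposition~\ref{prop:canon} is already proved; the paper's version buys a coordinate-free characterization of realizability. The one point you should tighten is USEG: Proposition~\ref{prop:canon} is stated only for UGIG, so your claim that the canonization ``never alters which partition is the unit one'' needs a sentence of actual argument --- run the unit-length sweep on the axis along which the unit partition extends, and assign the other axis by the standard integer-ordering argument for GIG (arbitrary-length intervals), which still yields a polynomially bounded rational certificate. With that filled in, your proof is complete and correct.
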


At the moment we prove  Theorem \ref{thm:nphard}. Corollary \ref{dusl:jednoduchy}
will follow directly. The remaining part of Corollary \ref{dusl:npcompleteness} will be shown later
in the text.

\begin{proof}

For graphs without the restriction on girth we may directly use the
reduction of Kratochv\'\i l and Pergel \cite{KP}. It is just necessary
to verify that the gadgets can be represented by orthogonal segments
of unit length.

\subsection{Planar 3-satisfiability}\label{ssec:3sat}

To show the result for graphs with arbitrary girth, we have to modify
this reduction: Although the clause-gadget does not require short cycles
(nor the variable gadget), this is not the case with the cross-over
gadget (even in the form of truth-splitter as presented in \cite{KP}).
Since seemingly no such arbitrary girth truth-splitter can be constructed, we will re-design
the reduction. Although the cross-over gadget was a cornerstone of all reductions based on Kratochv\'\i l's construction \cite{KratII}
so far, we use a similar approach without employing them, thus
having to re-design the variable gadget.

We reduce the question to solving Planar-3-connected-3-SAT(4), which was proven to be NP-hard
in \cite{KratII}. This problem asks whether a special type of boolean formula  in conjunctive
normal form is satisfiable.
Such a formula $\mathcal{F}$ must have the following properties:

\begin{itemize}

\item Each clause has at most three literals.

\item Each variable occurs at most four times.

\item $\mathcal{B}(\mathcal{F})$ is planar and 3-connected, where $\mathcal{B}(\mathcal{F})$ is the bipartite graph with one partition corresponding to the variables,
the other to the clauses, and an edge corresponds to
the occurence of a particular variable in a particular clause.

\end{itemize}

The general idea of the construction is that we consider this planar 3-connected graph and exploit the fact that it has
a unique planar embedding (up to the choice of an outer face)(REF needed). Each vertex
corresponding to a clause gets represented by a {\bf clause gadget}, whereas each
vertex corresponding to a variable gets represented by a {\bf variable gadget},
and finally, to each occurence we asign an {\bf occurence gadget}. Thus we obtain
a graph $G(\mathcal{F})$ that for a satisfiable instance (of the Planar-3-connected-3-SAT)
will be representable
by unit axis-aligned segments in a plane while for an unsatisfiable instance this graph
admits not even a pseudosegment representation. Moreover, the construction
of this graph permits us (by blowing the occurence- and clause gadgets
up) to obtain a graph of arbitrarily large (but constant) girth. In the following we will
describe these three gadgets:

\subsection{The gadgets} \label{ssec:gadget}

The \textbf{occurence gadget} is formed by a pair of non-intersecting paths $P_1, P_2$.
These paths start from the variable gadget and finish in the clause gadget.
The truth-assignment is given by the left-right orientation of these paths,
i.e., the occurence is true iff $P_1$ is (clockwise) "to the left" of $P_2$. This
way of representing occurences is usual in the reductions of this family.

The \textbf{clause gadget} gets reused from \cite{KP}. To make the article self-contained, we show it in the Figure \ref{clause:gadget}.

\begin{figure}[htbp]

\centering

\includegraphics[width=5cm]{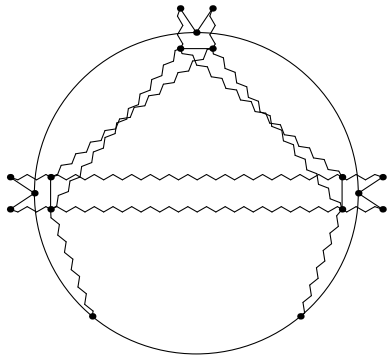}

\caption{The clause gadget}

\label{clause:gadget}

\end{figure}

 Straight segments represent one vertex, jigsaw curves depict arbitrarily long
paths whose length depends on the required girth. The surrounding cycle can
be extended only between individual pairs of paths, that correspond to the occurence gadgets entering the clause.

By our convention, the occurence-gadgets are entering it
from the left, from the right and from the top.

 In \cite{KP} it is proven that
this gadget cannot be represented even by pseudosegments when all three
literals are false. We now have to show that it can be represented by unit
segments in all other cases, which can be seen in Figure \ref{fig:notfff} below. 

\begin{figure}[htbp]

\centering

\includegraphics[scale=0.7]{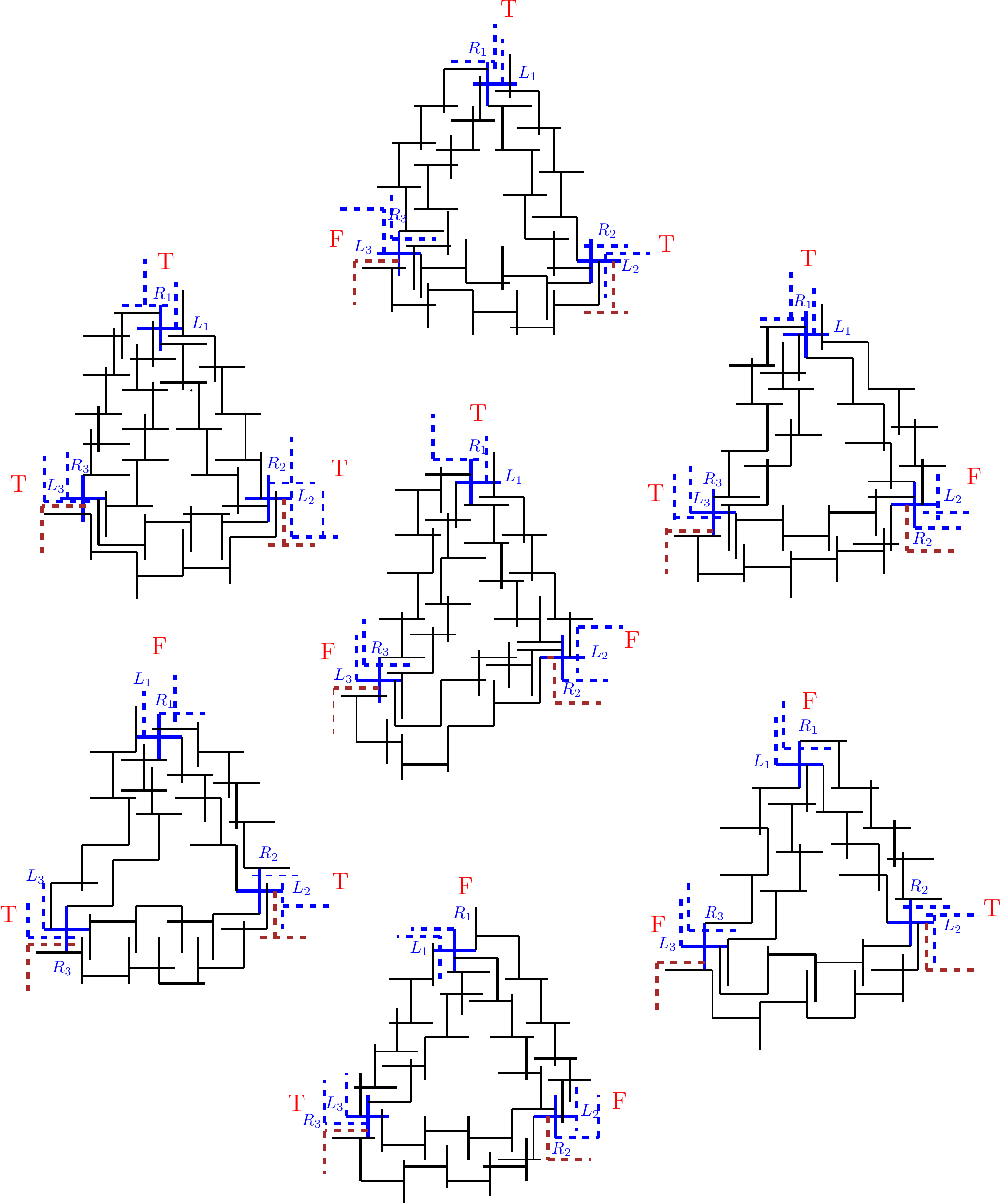}

\caption{UGIG representations of the clause gadget in the seven cases of satisfiability}

\label{fig:notfff}

\end{figure}

What is innovative about this construction, is the \textbf{variable gadget}
(that does not need a cross-over).
The aim of the variable gadget is to synchronize the truth-assignment
of the literals containing  the same variable. This gadget must allow sending three or four occurences of one variable to the corresponding clauses, while keeping the orientation of all of them consistent. Note that because
we are reducing not to simply 3-SAT(4) but Planar-3-connected-3-SAT(4),
due to 3-connectivity each variable must occur at least 3 times.

This variable gadget is very simple. It consists of two 
adjacent vertices $a, b$ (each variable has its own pair of vertices).
Out of each pair of paths of an occurence gadget,
one path gets attached to $a$ and the other to $b$. Which path gets
connected to which vertex is determined by the circular order of the
paths in the embedding of the 3-connected graph. At this point, please, note that we know
the circular-ordering in which the occurence-gadgets shall stem
from the variable gadget.

The vertices $a$ and $b$ are represented
by two crossing segments. Individual pairs of paths (representing
individual occurences) start from this cross and enter the corresponding
clause gadgets. Considering a variable that occurs four times, in
the first pair (representing the first occurence) the "left" path gets
attached to $a$, the "right" path to $b$. The same situation happens with
the third occurence while in the second and fourth occurence (with respect
to the circular ordering) the "left" is incident to $b$ and the "right"
path to $a$. Such a representation is depicted in Figure \ref{variable} below.

\begin{figure}[htbp]
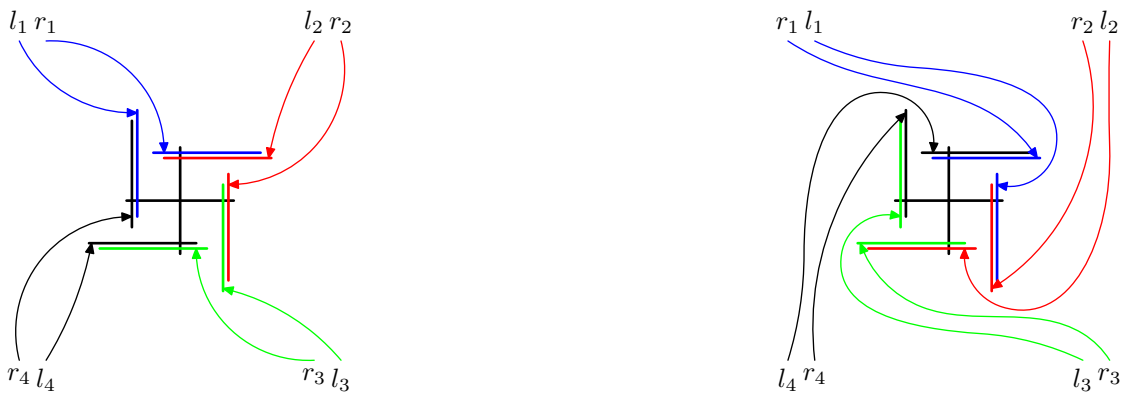


\includegraphics{obrazky.1}\hfill \includegraphics{obrazky.2}

\caption{The variable gadget: true(on the left) and false (on the right) assignment}

\label{variable}

\end{figure}

If the variable is assigned "true", the "left" paths
are to the left from the "right" ones), and the converse. The figure
shows only how the occurence-gadgets stem from the variable gadget. Note that it is possible to represent the other endpoints of the paths as segments that enter the corresponding clause gadgets. Also note that all four occurences must keep consistent truth-value. Here, the horizontal segment represents the vertex $a$, the vertical one corresponds to $b$.

Each individual occurence "blocks" the visibility
of the intersection point of the segments of $a$ and $b$  from one quadrant. Therefore there is enough space for
{\em exactly}  four pairs of paths. The planarity constraint also ensures that the orientation of one pair forces
the orientation of all  the others. 
If some variable occurs only three times (a lower number of occurences is
impossible due to $3$-connectivity of the given graph $G(\mathcal{F})$), we create a {\it dummy
occurence}. This {\it dummy occurence} behaves like an occurence gadget,
but finishes in the middle of a prescribed face. To prescribe in what
face this
gadget gets represented (to avoid distorting the left-right orientation),
we attach it to the neighboring occurence gadgets (see Figure \ref{dummy:occurence}).

\begin{figure}[htbp]
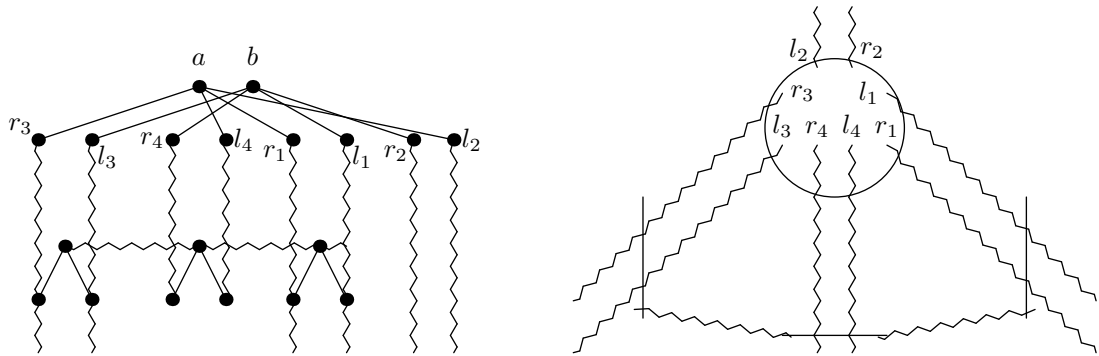


\begin{minipage}[b]{0.5\linewidth}
\includegraphics{obrazky.21}
\end{minipage}
\begin{minipage}[b]{0.5\linewidth}
\includegraphics{obrazky.20}
\end{minipage}

\caption{A dummy gadget (left) with an appropriate UGIG representation (right). note that in the representation we omitted the details inside the circle where
the paths should be appropriately connected to segments representing $a$ and $b$.}

\label{dummy:occurence}

\end{figure}

Note that the fourth (dummy) occurence starts as in the description of the variable gadget, and in order to keep it inside a prescribed face (to keep the consistence of the truth-assignment) we attach it to the neighboring occurences. 
 The length of these
paths depend on a prescribed girth. Note that inside the circle (on the right picture) we omitted the representation of the variable-gadget.

Note that the whole argumentation does not depend on geometrical properties( we do not use the fact that we are permitted to use only two directions
to represent the segments). Thus by this construction
we create either a unit grid intersection graph in case of formula satisfiability, or not even a PSEG-graph, hence these two
classes cannot be efficiently separated (if P$\not=$NP). Therefore
the recognition of any class containing UGIG-graphs that is simultaneously
contained in the class of PSEG-graphs is also NP-hard.

\end{proof}

\subsection{A polynomial certificate}

Now we show the remaining part of the proof of Corollary \ref{dusl:npcompleteness}. Although we could use the canonical construction with Cartesian coordinate-system, as we are interested in optimal representations, we show that even a smaller certificate suffices:

\begin{thm}

 The recognition problem for  UGIG and USEG graphs is in NP, i.e.,
there exists a certificate of size $O(n\log n)$ for which we can verify (in polynomial time) that it witnesses the representation.

\end{thm}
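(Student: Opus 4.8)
The plan is to show NP membership in the standard way: for every yes-instance exhibit a representation that can be encoded in $O(n\log n)$ bits and checked in polynomial time. For UGIG the encoding is nothing but the list of segment coordinates, and the whole content of the argument is that Proposition~\ref{prop:canon} lets us take these coordinates small. So, if $G$ on $n$ vertices is a UGIG, I would fix the canonical representation of Proposition~\ref{prop:canon} — inside the $(n+1)\times(n+1)$ grid, all coordinates being multiples of $\frac1n$ — and multiply every coordinate by $n$, obtaining an integer representation contained in $\{-n,\dots,n^2+n\}^2$. The certificate then records, for each vertex $v$: one orientation bit and the two integer coordinates of the lower-left endpoint of its segment (the other endpoint of a horizontal unit segment being obtained by adding $n$ to the first coordinate, after the rescaling). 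Each coordinate has $O(\log n)$ bits and each vertex contributes $O(1)$ numbers, so the certificate has size $O(n\log n)$; in particular it is far smaller than, say, writing out the whole $(n+1)\times(n+1)$ array of the canonical drawing.

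The verifier is the obvious one. From the list it reconstructs the $n$ segments and, for every unordered pair $\{u,v\}$, tests by a constant number of comparisons of $O(\log n)$-bit integers whether the two segments meet; it accepts iff (i) no two parallel segments intersect or overlap and (ii) every perpendicular pair intersects precisely when $uv\in E(G)$. That is $O(n^2)$ tests, hence polynomial time. Correctness is immediate in both directions: if $G$ is a UGIG the certificate above passes, and conversely any certificate that passes is by definition a UGIG representation of $G$, so the verifier accepts the encoding iff $G\in\mathrm{UGIG}$.

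For USEG the argument is the same once we have the analogue of Proposition~\ref{prop:canon}; I would obtain it by rerunning the sweep of that proposition axis by axis. On the $x$-axis nothing changes: a vertical segment projects to a single point whatever its length, so the left-to-right sweep produces $x$-coordinates that are multiples of $\frac1n$ in an interval of length $O(n)$, exactly as before. On the $y$-axis the vertical segments now project to intervals of unbounded length; I would first replace each vertical segment by the shortest subsegment still containing all intersection points lying on it, enlarged slightly at either end so that no new incidence is created (a vertical segment carrying no intersection at all is simply shrunk to unit length). Now only $O(n)$ features appear on the $y$-axis, so a second sweep assigns them integer $y$-coordinates bounded by $O(n)$ while keeping their relative order. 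The result is an integer representation of polynomial size, encoded by $O(1)$ numbers per vertex and hence $O(n\log n)$ bits; the verifier is literally unchanged.

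The one place that needs care is that the two sweeps are carried out independently, whereas an intersection of a horizontal segment $h$ with a vertical segment $v$ is the conjunction of an $x$-incidence ($x_v$ lies in the $x$-projection of $h$) and a $y$-incidence ($y_h$ lies in the $y$-projection of $v$). Here one uses that each sweep preserves the relative position of all projected features on its axis, hence all incidences on that axis — not only the ones recording edges. Consequently, for an edge $uv$ both conjuncts survive and the segments still meet, while for a non-edge at least one conjunct already fails in the original drawing and, being an incidence on one of the two axes, still fails after the corresponding sweep; thus adjacency and non-adjacency are both preserved. (One should also perturb at the outset so that parallel segments occupy pairwise distinct positions along their common direction, so that no two parallel segments ever meet and clause (i) above is automatically satisfied by the canonical representation.) With this observation in place, the certificate and verifier described above witness that recognition of UGIG and of USEG both lie in NP.
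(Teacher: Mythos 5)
Your proof is correct, but it takes the route the paper explicitly mentions and then declines: the paper writes ``Although we could use the canonical construction with Cartesian coordinate-system, \dots we show that even a smaller certificate suffices,'' and instead certifies membership by the two \emph{combinatorial orderings} $\mathcal{L}_x,\mathcal{L}_y$ of endpoint coordinates along each axis. The paper's verifier then checks that this ordering data is consistent with the adjacencies and, crucially, that it is realizable by \emph{unit} segments --- this is done in the style of the ``unit interval $=$ proper interval'' theorem, by checking that no projected interval is contained in another and then giving a sweep that assigns actual unit-length coordinates respecting the orderings. That realizability step is the real content of the paper's proof and is what your version avoids: by certifying explicit integer coordinates obtained from Proposition~\ref{prop:canon} (scaled by $n$), realizability is trivially witnessed and the verifier only has to do $O(n^2)$ comparisons of $O(\log n)$-bit numbers. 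Both certificates are $O(n\log n)$ bits, so nothing is lost asymptotically. The price you pay is that Proposition~\ref{prop:canon} is stated only for UGIG, so for USEG you must supply its analogue yourself; your sketch of this is sound precisely because the unit-length constraint of USEG lives on only one axis (the $x$-sweep is unchanged, and on the $y$-axis any order-preserving integer assignment of the $O(n)$ endpoint features works, the shrinking step being harmless but not actually needed), and your closing observation --- that each sweep preserves \emph{all} incidences on its axis, not just the ones recording edges, so that edges survive as conjunctions and non-edges as failed conjuncts --- is exactly the point that makes the axis-by-axis processing legitimate and matches the corresponding remark in the proof of Proposition~\ref{prop:canon}.
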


\begin{proof}

We use a method similar to the proof of equivalence for unit interval
graphs and proper interval graphs. Thus we have to encode the ordering of start- and end-points for individual segments with respect to individual axes. Name of each segment can be encoded by (say binary) number with size logarithmic w. r. t. its value. Details in the Appendix.

\xappendix{As the polynomial certificate we consider the combinatorial description of the
arrangement of axis-aligned segments. Here, we assume to know which partition
(in a given bipartite graph) is represented by vertical segments and which
by horizontal ones. Furthermore, without loss of generality, we assume there are no isolated vertices.
We consider the two linear orderings $\mathcal{L}_x$ and $\mathcal{L}_y$ of the $x$ coordinates (and $y$ respectively) of all the endpoints of the horizontal and vertical segments. These lists are referring to linearly many vertices, thus each vertex gets described on a logarithmic space (thus the representation

altogether has $O(n\log n)$ size).
For each segment (when having also a list of start- and end-coordinates), a linear number of comparisons (of the endpoint coordinates is needed, to establish whether it is partition-consistent (i.e., that vertical segments are represented

vertically).
Furthermore, for each pair of segments, it also takes linearly many operations to determine whether the relationship between the endpoint coordinates describe a vertex adjacency or not. These verifications will be performed for all segments, and, subsequently, for all pairs of segments.

Thus, we now have a polynomial certificate for the class GIG. For UGIGs we must
verify that this combinatorial description is representable by segments of
unit length.

We employ a similar approach as with interval graphs, where it is known that 
proper interval graphs are exactly  the unit interval graphs. We start by checking whether for any pair of segments from the horizontal, respectively vertical partition one is
"contained" in the other (i.e., it is contained in the vertical (respectively horizontal) infinite strip determined by the other). If no such pair exists (i.e. each partition corresponds to a proper set of intervals), we show that such an
arrangement has a UGIG realization.

We perform a sweep through both lists (one after another) and assign
the coordinates, while respecting the given vertical and horizontal orderings. We will describe the assignment algorithm for the $x$-coordinate, the one for the $y$-coordinate following analogously.
In the beginning the left-most coordinate is initialized to 0 (note that it must correspond to a left-most endpoint), thus forcing the corresponding right endpoint to take coordinate 1. 
Note that all points between the two endpoints assigned above are either left-endpoints or $x$-coordinates of vertical segments. Now, this already assigned segment is split equidistantly in $m+1$, where $m$ is the number of points lying between its endpoints. In the next step, each of the $m$ points (in the left-right ordering) is assigned the coordinate of the corresponding equidistant division point ( i.e. the $k$-th point gets coordinate $\frac{k}{m+1}$). For each of these points that is a left-endpoint we now have to assign the right coordinate by adding 1 to the left-coordinate established above.

In each subsequent step we extend the processed region to the right in one of the following two ways:

\begin{itemize}

\item Consider the rightmost endpoint of the processed region. If to its right there exists a right endpoint that was assigned in a previous step, consider the current region to be the interval between the right endpoint of the processed region and the \emph{leftmost} previously fixed right endpoint, lying to the right of it. Again, this region now has no interior points that are right endpoints, and the interior points are processed analogously to the first step.

\item If there is no such right endpoint, then the next point encountered when sweeping to the right is necessarily a left-endpoint. This point will get assigned the coordinate $x_{max}+1$, where $x_{max}$ is the maximal $x$-coordinate assigned so far. The algorithm now continues analogously as in the case of the left-most segment of the configuration.

\end{itemize}

\begin{figure}[htbp]\label{fig:certif}

\includegraphics [width=8.5cm]{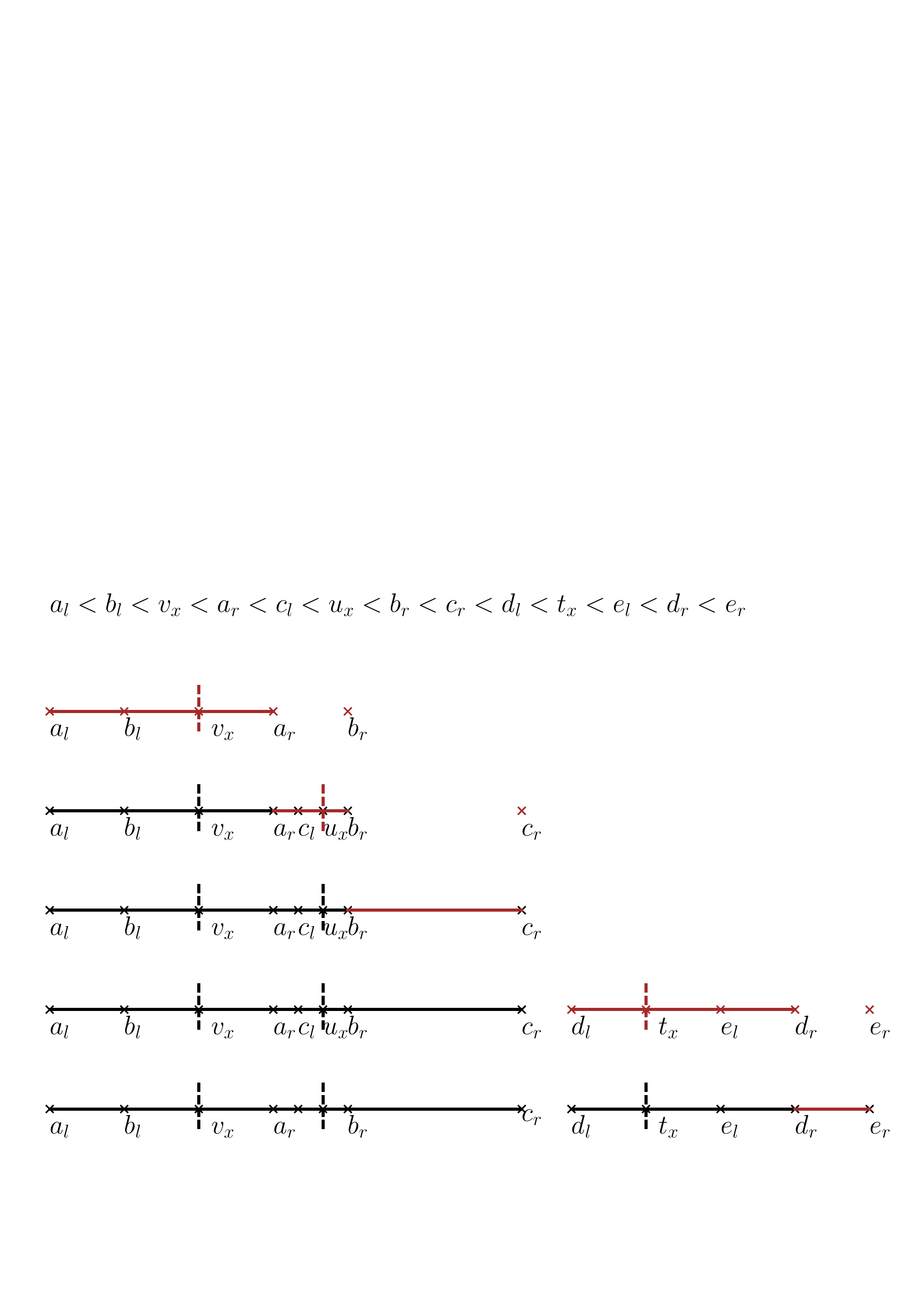}

\caption {The iteration of the above described algorithm. The brown areas indicate the current regions to be processed}

\end{figure}

}

\end{proof}

 \section* {Acknowledgements}

 We would like to thank Prof. Stefan Felsner for his contributions to the proof of Theorem \ref{thm:orgineps} and Theorem \ref{thm:treeunbound}.

\section*{Open problems}


It has been unknown for several years how difficult it is to recongnize and/or describe orthogonal ray graphs. This becomes even more interesting in the context of this graph class lying between the NP-complete UGIGs and the 
quadratic orthogonal ray graphs. Recently, G. Mertzios, I. M. and S. Felsner have shown that there exists a polynomial recognition algorithm for ORGs, in the case the vertices have prescribed ray orientations. \cite{FMM} 

One can also ask about the description of hybrid bipartite graph classes, with one partition representable by (unit) segments and the other by rays. Another open problem is whether the recognition of unit segment graphs (without restriction on directions) is in NP. We show that it is NP-hard and one could observe (from Pythagorean triangles) that each direction can be approximated by a unit segment with rational endpoints, but it may happen that for describing an $n$ vertex graph, we need, e.g., exponential precision for individual coordinates.

 \bibliographystyle{plain}

\bibliography{ugig3}

\begin{thebibliography}{10}

\bibitem{BHPW}
S.~Bellantoni, I.~Ben-Arroyo Hartman, T.~Przytycka, and S.~Whitesides.
\newblock Grid intersection graphs and boxicity.
\newblock {\em Discrete Mathematics}, 114(1 -- 3):41 -- 49, 1993.

\bibitem{Boo}
K.S. Booth and G.S. Lueker.
\newblock Testing for the consecutive ones property, interval graphs, and graph
  planarity using pq-tree algorithms.
\newblock {\em Journal of Computer and System Sciences}, 13(3):335 -- 379,
  1976.

\bibitem{ChGO}
J.~Chalopin and D.~Gon\c{c}alves.
\newblock Every planar graph is the intersection graph of segments in the
  plane: extended abstract.
\newblock In {\em Proceedings of the 41st annual ACM symposium on Theory of
  computing}, STOC '09, pages 631 -- 638, New York, NY, USA, 2009. ACM.

\bibitem{Fel}
{C.} {Dangelmayr}, {S.} {Felsner}, and {W.}~T. {Trotter}.
\newblock Intersection graphs of pseudosegments: Chordal graphs.
\newblock {\em Journal of Graph Algortihms and Applications}, 14(2):199 -- 220,
  2010.

\bibitem{FHH}
T.~Feder, P.~Hell, and J.~Huang.
\newblock List homomorphisms and circular arc graphs.
\newblock {\em Combinatorica}, 19:487 -- 505, 1999.

\bibitem{FEL2}
S.~Felsner.
\newblock 3-interval irreducible partially ordered sets.
\newblock {\em Order}, 11:12 -- 5, 1994.

\bibitem{FHM}
S.~Felsner, M.~Habib, and R.H. M\"ohring.
\newblock On the interplay between interval dimension and dimension, 1991.

\bibitem{FMM}
S.~Felsner, G.~Mertzios, and I.~Musta\textcommabelow{t}\u{a}.
\newblock On the recognition of four-directional orthogonal ray graphs.
\newblock submitted, 2013.

\bibitem{Gil}
P.C. Gilmore and A.J. Hoffman.
\newblock {A characterization of comparability graphs and of interval graphs}.
\newblock {\em Canadian journal of arithmetics}, 1964.

\bibitem{HMPV}
M.~Habib, R.~McConnell, Ch. Paul, and L.~Viennot.
\newblock Lex-bfs and partition refinement, with applications to transitive
  orientation, interval graph recognition and consecutive ones testing.
\newblock {\em Theoretical Computer Science}, 234(1 - 2):59 -- 84, 2000.

\bibitem{HNZ}
I.~Ben-Arroyo Hartman, I.~Newman, and R.~Ziv.
\newblock On grid intersection graphs.
\newblock {\em Discrete Mathematics}, 87(1):41 -- 52, 1991.

\bibitem{HK}
P.~Hlin\v{e}n\'{y} and J.~Kratochv\'{\i}l.
\newblock Representing graphs by disks and balls (a survey of
  recognition-complexity results).
\newblock {\em Discrete Math.}, 229(1 -- 3):101 -- 124, February 2001.

\bibitem{KP}
J.~Kratochv\'{\i}l and M.~Pergel.
\newblock Geometric intersection graphs: Do short cycles help?
\newblock In Guohui Lin, editor, {\em Computing and Combinatorics}, volume 4598
  of {\em Lecture Notes in Computer Science}, pages 118 -- 128. Springer Berlin
  Heidelberg, 2007.

\bibitem{KratII}
J.~Kratochv\'\i l.
\newblock A special planar satisfiability problem and a consequence of its
  np-completeness.
\newblock {\em Discrete Applied Mathematics}, 52(3):233 -- 252, 1994.

\bibitem{KM}
J.~Kratochv\'\i l and J.~Matou\v sek.
\newblock Intersection graphs of segments.
\newblock {\em Journal of Combinatorial Theory, Series B}, 62(2):289 -- 315,
  1994.

\bibitem{MM}
T.A. McKee and F.R. McMorris.
\newblock {\em Topics on Intersection Graphs}.
\newblock SIAM, 1999.

\bibitem{OOY}
Y.~Otachi, Y.~Okamoto, and K.~Yamazaki.
\newblock Relationships between the class of unit grid intersection graphs and
  other classes of bipartite graphs.
\newblock {\em Discrete Applied Mathematics}, 155(17):2383 -- 2390, 2007.

\bibitem{Pach}
J.~Pach and G.~T\'oth.
\newblock Recognizing string graphs is decidable.
\newblock In Petra Mutzel, Michael J\"unger, and Sebastian Leipert, editors,
  {\em Graph Drawing}, volume 2265 of {\em Lecture Notes in Computer Science},
  pages 247 -- 260. Springer Berlin Heidelberg, 2002.

\bibitem{Perm}
M.~Pergel.
\newblock Recognition of polygon-circle graphs and graphs of interval filaments
  is {NP}-complete.
\newblock In Andreas Brandst\"adt, Dieter Kratsch, and Haiko M\"uller, editors,
  {\em Graph-Theoretic Concepts in Computer Science}, volume 4769 of {\em
  Lecture Notes in Computer Science}, pages 238 -- 247. Springer Berlin
  Heidelberg, 2007.

\bibitem{SSS}
M.~Schaefer, E.~Sedgwick, and D.~\v{S}tefankovi\v c.
\newblock Recognizing string graphs is in {NP}.
\newblock {\em Journal of Computer and System Sciences}, 67:2003, 2002.

\bibitem{STU}
A.M.S. Shrestha, S.~Tayu, and S.~Ueno.
\newblock Orthogonal ray graphs and nano-{PLA} design.
\newblock In {\em Circuits and Systems, 2009. ISCAS 2009. IEEE International
  Symposium on}, pages 2930 -- 2933, 2009.

\bibitem{STKU}
A.M.S. Shrestha, S.~Tayu, and S.~Ueno.
\newblock On orthogonal ray graphs.
\newblock {\em Discrete Appl. Math.}, 158(15):1650 -- 1659, August 2010.

\bibitem{UEH}
R.~Uehara.
\newblock Simple geometrical intersection graphs.
\newblock In {\em Proceedings of the 2nd international conference on Algorithms
  and computation}, WALCOM'08, pages 25 -- 33, Berlin, Heidelberg, 2008.
  Springer-Verlag.

\end{thebibliography}

\end{document}